\documentclass[11pt]{amsart}

%% Packages ####################################################################
\usepackage[utf8]{inputenc} % Allow non-ASCII input
\usepackage[english]{babel}
\usepackage[left=1.5in, right=1.5in, top=1.5in, bottom=1.5in]{geometry}
\usepackage{amssymb}        % Blackboard bold and other symbols
\usepackage{amsthm}         % Custom Theorems
\usepackage{mathtools}      % Extend AMSMath packages
\usepackage{physics}        % Shorthand notations
\usepackage{xcolor}         % Color Links
\usepackage[
    hyperfootnotes=false      %% disable patching footnotes for biblatex
]{hyperref}                 % Hyperlinks in PDF
\usepackage[capitalize]{cleveref}       % Named references
\usepackage{enumitem}       % Better lists in theorems
\usepackage{chngcntr}       % Number equations by theorem
\usepackage{xcolor,colortbl}

%Shorthand commands and operators. 

\definecolor{greenrb}{rgb}{0.2,0.6,0.2}
%\definecolor{brown}{rgb}{0.59, 0.29, 0.0}
\definecolor{rred}{rgb}{0.7,0,0.1}

\newcommand{\R}{\mathbb{R}}
\newcommand{\T}{\mathbb{T}}

\newcommand{\Z}{\mathbb{Z}}
\newcommand{\C}{\mathbb{C}}
\newcommand{\PP}{\mathcal{P}}
\DeclareMathOperator*{\argmax}{arg\,max}

\DeclareMathOperator*{\re}{Re}
\DeclareMathOperator*{\im}{Im}

\def\ol{\overline}

\def\be{\begin{equation}}
\def\ee{\end{equation}}

\definecolor{linkblue}{HTML}{00356B}
\definecolor{linkgold}{HTML}{DA9100}
\definecolor{linkred}{RGB}{159,  29, 53}
\hypersetup{
	colorlinks = true,
	linkcolor = linkred,
	citecolor = linkred,
	urlcolor = linkblue
}

\newtheorem{theorem}{Theorem}[section]

\newtheorem{proposition}[theorem]{Proposition}
\theoremstyle{definition}
\newtheorem{definition}{Definition}
\theoremstyle{definition}
\newtheorem*{remark}{Remark}

\def\titletext{Dynamic Transitions of the Swift-Hohenberg Equation with Third-Order dispersion}
\title[Transitions of S-H Equation with Third-Order dispersion]{\MakeUppercase{\titletext}}
\author[Kevin Li]{Kevin Li}
\address{Department of Mathematics \\ Yale University \\ New Haven, CT 06510, USA}
\email[Kevin Li]{k.li@yale.edu}

\date{\today}

\begin{document}

\maketitle

\begin{abstract}
    The Swift-Hohenberg equation is ubiquitous in the study of bistable dynamics. In this paper, we study the dynamic transitions of the Swift-Hohenberg equation with a third-order dispersion term in one spacial dimension with a periodic boundary condition. As a control parameter crosses a critical value, the trivial stable equilibrium solution will lose its stability, and undergoes a dynamic transition to a new physical state, described by a local attractor. The main result of this paper is to fully characterize the type and detailed structure of the transition using dynamic transition theory \cite{phase_transition_dynamics}. In particular, employing techniques from center manifold theory, we reduce this infinite dimensional problem to a finite one since the space on which the exchange of stability occurs is finite dimensional. The problem then reduces to analysis of single or double Hopf bifurcations, and we completely classify the possible phase changes depending on the dispersion for every spacial period. 
\end{abstract}
\setcounter{tocdepth}{1}
\tableofcontents

\section{Introduction}
The standard Swift-Hohenberg equation was introduced to describe the onset of Rayleigh-Benard convection, which considers a horizontal layer of viscous fluid heated from below. It is given by 
\[\frac{\partial u}{\partial t} = -(1 + \Delta)^2 u + \lambda u - u^3,\]
and is crucial to the study of non-equilibrium physic due to the natural formation of convection patterns as the fluid is heated. As the control parameter $\lambda$ increases, the equation exhibits distinct pattern forming behavior \cite{2d_swift, pattern_formation}. Spacial and temporal patterns occur when systems transition from a basic stable state and bifurcates to nontrivial attractors when the control parameter crosses a critical value. The phase transition dynamics of the standard Swift-Hohenberg equation is well understood in one and two spacial dimensions \cite{2d_swift}. Beyond hydrodynamics, this fourth-order equation has a plethora of applications in bistable dynamics. Of particular interest to us is when a ring cavity made of optical fibers is driven by a beam. When the beam is operating near the resonant frequency of the cavity, the dynamics exhibited by the the cavity is modelled by the Swift-Hohenberg equation with a third-order dispersion term \cite{phys}.

In this paper, we fully characterize the phase transition dynamics of the Swift-Hohenberg equation with a third-order dispersion in one spacial dimension, given by 
\begin{equation} \label{SH_system}
\begin{cases}
u_t = \lambda u - (1 + \partial_x^2)^2 u + \sigma \partial_x^3 u + b u^2 - u^3 \quad \text{for } x \in [0, \ell], \, t \ge 0 \\ u(t, 0) = u(t, \ell) \quad \forall t \ge 0.
\end{cases}
\end{equation}
Let $\T$ denote the torus of measure $\ell$. Then we can consider 
\begin{equation*}\label{operator}
    L_\lambda = \lambda - (1 + \partial_x^2)^2 + \sigma \partial_x^3
\end{equation*}
as an operator from $H^4(\T) \to L^2(\T)$. Note that the nonlinear mapping $G(u, \lambda) = bu^2 - u^3$ can be expanded as $G(u, \lambda) = G_2(u, \lambda) + o(\lVert u \rVert^2_{H^4(\T)})$ where $G_2(u_1, u_2, \lambda) = b u_1 u_2$ is a one parameter family of bilinear mappings. Since we have the embedding $H^4(\T) \hookrightarrow C^3(\T)$, then \cref{SH_system} with the boundary condition can be recasted as 
\be 
u_t = L_\lambda u + G(u, \lambda).
\ee
Small perturbation of the control parameter, $\lambda$, about a critical value leads to significantly different dynamics near the trivial equilibrium point. 

To analyze the exchange of stability that occurs across a critical value of $\lambda$, we use the framework of dynamic transition theory set forth by Ma and Wang \cite{phase_transition_dynamics}. In equilibrium thermodynamics, the standard characterization of phase transition is through the Ehrenfest classification, where transitions are classified based on the regularity across a critical value in the control parameter with respect to some thermodynamic potential. For example, as the temperature crosses the $0^\circ$C threshold for a sample of water, the first order derivative of the Gibbs potential function is discontinuous at $T = 0^\circ$C, so it is a first order transition. This largely hides the dynamic properties of the system in state space. For general dynamical systems, it is natural to analyze how an attractor loses stability as the control parameter crosses a critical value, in relations to local attractors. 

The paper will be organized as follows. We start with some preliminaries, in which we give an overview of general dynamic transition theory, as well as a general description of the dynamic transitions that accompany a Hopf and double Hopf bifurcation. After the preliminaries, the main theorem will be given by \cref{MAIN}, which will fully characterize the dynamic transitions of the Swift-Hohenberg equation with third-order dispersion in one dimension. In particular, we will show in \cref{section:dynamic_transitions} that for $\ell \le 2 \pi /\sqrt{2}$, the transition and dynamics are completely determined by whether $b$ is non-zero in \cref{SH_system}. For $\ell \le 2 \pi/\sqrt{2}$, we will either end up with a simple Hopf bifucation or a double Hopf bifuction, the behaviors of which will be fully characterized in \cref{th:single_hopf} and \cref{th:double_hopf}.

\section{Preliminaries}
We start with a general dynamical system. Let $X_1$ and $X$ be two Banach spaces such that $X_1 \subset X$ is a compact inclusion. Consider the nonlinear evolution equation 
\begin{equation}\label{dynamic_system} 
\frac{\partial u}{\partial t} = L_\lambda u + G(u, \lambda), \quad u(0) = u_0 
\end{equation}
and $L_\lambda$ is a one parameter family of linear completely continuous fields depending continuously on $\lambda$, i.e.
\be \label{completey_continuous_cond}
\begin{array}{ll}
    L_\lambda = -A + B_\lambda & \quad \text{is a sectorial operator} \\
    A: X_1 \to X & \quad  \text{is a linear homeomorphism} \\
    B_\lambda: X_1 \to X & \quad  \text{linear compact operators parameterized by } \lambda.
\end{array}
\ee
In this case, we can define fractional power operators $L^\alpha_\lambda$ with domain $X_{\alpha} = D(L^\alpha_\lambda)$. We further assume that $G( \cdot, \lambda) : X_\alpha \to X$ is a $C^r$ bounded mapping for some $0 \le \alpha < 1$ and $r \ge 1$, and depends continuously on $\lambda$, and for all $\lambda \in \R$,
\be
G(u,\lambda) = o(\lVert u \rVert_{X_{\alpha}})
\ee 
This implies that $u = 0$ is a trivial equilibrium point. Clearly there is no loss of generality in this assumption since we may just shift the system by a constant.

\begin{definition} \cite{phase_transition_dynamics}
The system given by \cref{dynamic_system} satisfying \cref{completey_continuous_cond} undergoes a dynamic transition at $\lambda = \lambda_0$ if the following conditions hold:
\begin{enumerate}
    \item if $\lambda < \lambda_0$, \cref{dynamic_system} is locally asymptotically stable at $u = 0$,
    
    \item if $\lambda > \lambda_0$, $\text{codim} (\Gamma_\lambda) \ge 1$ where $\Gamma_\lambda$ is the stable manifold of \cref{dynamic_system} about $u = 0$ for $\lambda$, and there exists a neighborhood $U \subset X$ of $u = 0$ independent of $\lambda$ such that for any $u_0 \in U \setminus \Gamma_\lambda$, the solution $u_\lambda$ with initial vaue $u_0$ satisfies
    \[\limsup_{t \to \infty} \lVert u_{\lambda}(t, u_0) \rVert_X \ge \delta(\lambda) > 0, \qquad \lim_{\lambda \to \lambda_0^+} \delta(\lambda) \ge 0.\]
\end{enumerate}
\end{definition}
This amounts to saying that a dynamic transition happens when the equilibrium point at $u = 0$ loses stability across $\lambda_0$. The simplest way this occurs is if eigenvalues of $L_\lambda$ crosses the imaginary axis as $\lambda$ crosses $\lambda_0$. If such an exchange of stability occurs, then we can classify the dynamic transition into three categories. 
\begin{theorem}\cite{phase_transition_dynamics}
Let the eigenvalues of $L_\lambda$ be given by $\{\beta_i(\lambda) : i = 1, 2, \dots\}$, and suppose
\begin{equation}\label{PES}
    \begin{array}{ll}
    \re(\beta_i) \begin{cases} < 0 & \text{if } \lambda < \lambda_0 \\ = 0 & \text{if } \lambda = \lambda_0 \\ > 0 & \text{if } \lambda > \lambda_0 \end{cases} & \qquad 1 \le i \le m\\
    \re(\beta_i) < 0 & \qquad i > m,
\end{array}
\end{equation}
then \cref{dynamic_system} undergoes a dynamic transition at $\lambda = \lambda_0$, and is one of the following three types:
\begin{enumerate}[label = (\roman*)]
    \item there exists a neighborhood $U \subset X$ of $u = 0$ and dense subsets $\widetilde{U}_\lambda$ so that for any $u_0 \in \widetilde{U}_\lambda$, solution $u_\lambda(t, u_\lambda)$ satisifes
    \[\lim_{\lambda \to \lambda_0^+} \limsup_{t \to \infty} \lVert u_{\lambda}(t, u_0) \rVert = 0\]
    
    \item there exists a neighborhood $U \subset X$ of $u = 0$ and dense subset $\widetilde{U}_\lambda$ and an $\epsilon > 0$ so that for any $\lambda_0 < \lambda < \lambda_0 + \epsilon$ and any $u_0 \in \widetilde{U}_\lambda$, solution $u_\lambda(t, u_\lambda)$ satisfies
    \[\limsup_{t \to \infty} \lVert u_\lambda(t, u_0) \rVert \ge \delta > 0\]
    for some fixed $\delta$ independent of $\lambda$. 
    
    \item there exists a neighborhood $U \subset X$ of $u = 0$ such that for any $\lambda_0 < \lambda < \lambda_0 + \epsilon$ for some $\epsilon > 0$, we have a decomposition $U^1_\lambda$ and $U^2_\lambda$ so that $\overline{U} = \overline{U}^1_\lambda \cup \overline{U}^2_\lambda$ and $U^1_\lambda \cap U^2_\lambda = \varnothing$, and
    \[\begin{array}{ll} \lim_{\lambda \to \lambda_0^+} \limsup_{t \to \infty} \lVert u_{\lambda}(t, u_0) \rVert = 0 & \quad u_0 \in U^1_\lambda \\ \limsup_{t \to \infty} \lVert u_\lambda(t, u_0) \rVert \ge \delta > 0 &\quad u_0 \in U^2_\lambda \end{array} \]
    for some fixed $\delta$ independent of $\lambda$. 
\end{enumerate}
\end{theorem}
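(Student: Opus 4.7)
The plan is to use a center manifold reduction to collapse the infinite-dimensional system to a finite-dimensional one of dimension $m$, and then invoke a topological trichotomy on the possible local dynamics after the stability crossing.

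First, I would exploit the spectral separation guaranteed by \cref{PES}. Because $L_\lambda = -A + B_\lambda$ is sectorial with compact resolvent, its spectrum consists of isolated eigenvalues of finite multiplicity, and the critical eigenvalues $\beta_1(\lambda),\dots,\beta_m(\lambda)$ stay uniformly separated from the rest for $\lambda$ in a neighborhood of $\lambda_0$. A Dunford contour enclosing these eigenvalues defines a spectral projection $P_\lambda$ depending continuously on $\lambda$, giving a decomposition $X = E^c_\lambda \oplus E^s_\lambda$ with $\dim E^c_\lambda = m$. On $E^s_\lambda$ the analytic semigroup $e^{tL_\lambda}$ decays exponentially at a rate uniform in $\lambda$ near $\lambda_0$, which is the input needed for the invariant manifold theorem.

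Next, I would apply the parameterized center manifold theorem for $C^r$ sectorial evolution equations: for each $\lambda$ close to $\lambda_0$, there exists a local $m$-dimensional $C^r$ invariant manifold $M^c_\lambda \subset X_\alpha$ tangent to $E^c_\lambda$ at $0$ that attracts every nearby orbit exponentially. On $M^c_\lambda$ the flow is conjugate to a finite-dimensional ODE $\dot y = J_\lambda y + g(y,\lambda)$ on $\R^m$ with $J_\lambda$ having spectrum $\{\beta_i(\lambda)\}_{i=1}^m$ and $g(y,\lambda) = o(\lvert y\rvert)$. Since the transverse stable directions decay at a uniform rate, the long-time behavior of the full system in a neighborhood of $u=0$ is determined, up to exponential tubular contraction, by the reduced flow on $M^c_\lambda$.

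Then I would classify the reduced finite-dimensional flow. For $\lambda > \lambda_0$ close to $\lambda_0$, the origin is unstable because $J_\lambda$ has $m$ eigenvalues in the open right half-plane. In a small ball of $\R^m$ centered at $0$, the set of initial points whose forward orbit escapes a fixed ball of radius $\delta$ is open by continuous dependence on initial conditions, and its complement consists of points attracted to some bifurcating local invariant set near $0$. The trichotomy in the statement corresponds precisely to the three non-empty configurations of these two sets: either the escape set is empty and all nearby orbits shrink back as $\lambda \to \lambda_0^+$ (type (i)), or the attracted set reduces to the stable manifold of $0$ and every generic orbit is pushed to distance at least $\delta$ (type (ii)), or both sets have non-empty interior and we get the mixed decomposition $U = \overline{U^1_\lambda} \cup \overline{U^2_\lambda}$ (type (iii)). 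Pulling back through the projection associated with $M^c_\lambda$ yields the dense subsets $\widetilde U_\lambda$ in $X$.

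The hard part will be arguing that the trichotomy is genuinely exhaustive and that the subsets $\widetilde U_\lambda$ are dense and behave continuously in $\lambda$. This requires ruling out pathological intermediate configurations via a careful use of the Conley index or attractor bifurcation machinery on the reduced system, and verifying that lifts through the stable fibers preserve density. These technicalities are carried out in full in \cite{phase_transition_dynamics}, and I would appeal to that reference once the reduction above is in place.
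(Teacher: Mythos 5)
The paper does not prove this theorem: it is stated as a quotation from Ma--Wang, and the text immediately afterwards simply says that a proof is given in \cite{phase_transition_dynamics}, so your eventual appeal to that reference for the exhaustiveness of the trichotomy and the uniformity of $\delta$ is exactly what the paper itself does. The center-manifold scaffolding you lay out beforehand (spectral projection from the sectorial-plus-compact structure, exponential attraction onto an $m$-dimensional reduced flow, and the escape-versus-attraction dichotomy on the reduced system) is a faithful outline of the strategy carried out in that reference, and I see no step in it that would fail.
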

These transitions are called \textit{continuous, catastrophic,} and \textit{mixed} transitions respectively, and we call $\beta_i$ with $1 \le i \le m$ the critical eigenvalues. A proof of the theorem is given by Ma-Wang \cite{phase_transition_dynamics}. Physically, a phase transition occurs when a system leaves a basic local attractor to another, called the transition states, as a system parameter $\lambda$ crosses the threshold value $\lambda_0$. In a continuous transition, the transition states attracts a neighborhood of the basic state. In a catastrophic transition, the transition states are local attractors away from the basic state. Note that this classification has a natural relation to the Ehrenfest classification for equilibrium phase transitions. In particular, catastrophic implies first order since stability is lost abruptly and jumps away from the basic state. Continuous transition corresponds directly to second order transition. For the complete relation between the two classifications, see \cite{phase_transition_dynamics}.

If the system satisfies the conditions given in (\ref{PES}), it suffices to analyze the dynamics on the finite dimensional center manifold tangent to the subspace spanned by eigenvectors $\beta_i$ with $1 \le i \le m$, since the space orthogonal to the center subspace is stable. Hence any solution with initial value near $0$ will be attracted towards the center manifold. Furthermore, the center manifold is tangent to the center subspace, so the dynamics on the center manifold near the trivial equilibrium point is equivalent to the dynamics of the solution projected onto the center subspace. In particular, we have the following two useful theorems that extend the Hopf bifurcation theorem. 

\begin{theorem}\label{th:single_hopf} \cite{double_hopf,ma2005bifurcation,phase_transition_dynamics,appendix}
Consider a system in the form of \cref{dynamic_system} that satisfies (\ref{completey_continuous_cond}). Assume that there are two critical eigenvalues that are complex simple, $\beta_1$ and $\ol \beta_1$, so that assumption (\ref{PES}) holds. Further assume that the solutions to the system projected onto the critical eigenvector $\phi_1$ for sufficiently small initial value and $\lambda$ near $\lambda_0$ satisfies
\begin{equation}\label{reduced_ode}
    \frac{d z}{dt} = \beta_1(\lambda) z + P(\lambda) z |z|^2 + o(|z|^3)
\end{equation}
where $z$ is the amplitude of projection of the solution and $P(\lambda)$ is continuously differentiable near $\lambda = \lambda_0$. $P = P(\lambda_0) \in \C$ is a constant called the transition number. Then the transition of the system is completely characterized by the dynamics at $\lambda = \lambda_0$. In particular, we have the following characterization:
\begin{enumerate}[label = (\roman*)]
    \item If $\re(P) < 0$, the system undergoes a continuous transition to a local attractor $\sigma_{\lambda}$ homological to $S^1$, and the basic steady state solution $u = 0$ bifurcates to a stable periodic solution $u_\lambda$ on $\lambda > \lambda_0$.
    \item If $\re(P) > 0$, the system undergoes a catastrophic transition, and the basic steady state solution $u = 0$ bifurcates to an unstable periodic solution $u_\lambda$ on $\lambda < \lambda_0$.
    \item $u_\lambda$ in both (1) and (2) have the approximation 
    \begin{equation}
        u_\lambda(x, t) = 2\left( \frac{-\re \beta_1(\lambda) }{\re P} \right)^{1/2} \re \left(e^{i \omega t} \phi_1 \right) + o(\re \beta)^{1/2}
    \end{equation}
    where
    \begin{equation*}
        \omega = \im \beta_1 - \im P \frac{\re \beta_1}{\re P}
    \end{equation*}
\end{enumerate}
\end{theorem}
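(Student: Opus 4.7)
The plan is to reduce the infinite-dimensional dynamics to the two-dimensional center manifold spanned by $\phi_1$ and $\overline{\phi_1}$, and then to work in polar coordinates on that manifold. By the standing assumptions \eqref{completey_continuous_cond} and \eqref{PES}, the only critical modes are the complex conjugate pair $\beta_1(\lambda), \overline{\beta_1(\lambda)}$, and all remaining spectrum stays uniformly in the left half-plane for $\lambda$ near $\lambda_0$. Standard center-manifold theory (e.g.\ the version in \cite{phase_transition_dynamics,appendix}) then gives a locally invariant, locally exponentially attracting two-dimensional $C^r$ manifold $M_\lambda$ tangent to $\mathrm{Span}\{\phi_1,\overline{\phi_1}\}$ at the origin, on which the dynamics is conjugate to \eqref{reduced_ode}. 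Since $M_\lambda$ attracts a neighborhood of $u=0$ in $X_\alpha$, it is enough to classify the flow of \eqref{reduced_ode} and then lift the resulting invariant set back to $X_\alpha$ via the graph of the center manifold.

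Next I would pass to polar coordinates $z = r e^{i\theta}$ in \eqref{reduced_ode}. Writing $\beta_1(\lambda) = a(\lambda) + i b(\lambda)$ and $P(\lambda) = p(\lambda) + i q(\lambda)$, the system decouples at leading order into
\begin{equation*}
\dot r = a(\lambda)\, r + p(\lambda)\, r^{3} + o(r^{3}),
\qquad
\dot \theta = b(\lambda) + q(\lambda)\, r^{2} + o(r^{2}).
\end{equation*}
The amplitude equation is a one-dimensional pitchfork-type equation in $r^{2}$. Solving $a(\lambda) + p(\lambda) r^{2} = 0$ shows that a nontrivial equilibrium exists precisely when $a(\lambda)$ and $p(\lambda)$ have opposite signs, and yields $r_\lambda^{2} = -a(\lambda)/p(\lambda) = -\re\beta_1(\lambda)/\re P + o(1)$. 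A standard linearization of the $r$-equation at $r_\lambda$ gives the eigenvalue $-2 a(\lambda) + o(a)$, so the bifurcated circle is attracting when $\re P < 0$ and repelling when $\re P > 0$. This is exactly the dichotomy (i)–(ii): in case (i) the circle appears for $\lambda > \lambda_0$ and, being asymptotically stable, it is a local attractor homeomorphic to $S^{1}$ — yielding a continuous transition in the sense of the previous theorem; in case (ii) the circle appears for $\lambda < \lambda_0$, is unstable, and coexists with the stable origin, so when it disappears across $\lambda_0$ the trajectories must jump, giving a catastrophic transition.

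To obtain the approximation (iii), I would substitute $r = r_\lambda$ into the $\theta$-equation to read off the instantaneous frequency $\omega = b(\lambda) + q(\lambda) r_\lambda^{2} = \im\beta_1 - \im P \cdot \re\beta_1/\re P + o(1)$, matching the claimed formula. On the center manifold, a point with amplitude $z = r_\lambda e^{i(\omega t+\theta_0)}$ corresponds in $X_\alpha$ to $u_\lambda = z\phi_1 + \overline{z}\,\overline{\phi_1} + \Psi(z,\overline z;\lambda)$, where $\Psi$ is the (at least quadratic) graph of the center manifold over its tangent space. Since $|z| = r_\lambda = O(|\re\beta_1|^{1/2})$ and $\Psi = O(|z|^{2}) = O(|\re\beta_1|)$, dropping $\Psi$ produces exactly the stated expansion with error $o(|\re\beta_1|^{1/2})$.

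The main technical obstacle, to my mind, is justifying the passage from the normal-form truncation to the full dynamics: one must show that the $o(|z|^{3})$ remainder in \eqref{reduced_ode} does not destroy the periodic orbit constructed from the truncated system, and that the hyperbolicity picked up in the amplitude linearization survives the remainder uniformly in $\lambda$ near $\lambda_{0}$. This is handled by a standard implicit-function / persistence-of-hyperbolic-limit-cycle argument applied to the Poincaré map of the truncated planar vector field; the strict sign of $\re P$ provides the needed nondegeneracy. Once this is established, attractivity of the center manifold transports all conclusions from $M_\lambda$ to the full PDE \eqref{SH_system}, completing the proof.
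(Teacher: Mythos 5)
Your proposal is correct and follows essentially the same route as the paper's proof: pass to polar coordinates, read the transition type off the sign of $\re P$ in the amplitude equation at $\lambda=\lambda_0$, and obtain the radius $(-\re\beta_1/\re P)^{1/2}$ and frequency $\omega$ from the nontrivial zero of the amplitude equation and the phase equation. You are somewhat more careful than the paper on two points the paper leaves implicit (the linearization $-2\re\beta_1$ at the bifurcated circle and the persistence of the cycle under the $o(|z|^3)$ remainder), but these are refinements of, not departures from, the same argument.
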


\begin{proof}
Making the substitution $z = \rho(t) e^{i \gamma(t)}$ where $\rho$ and $\gamma$ are real-valued functions and $\rho(t) > 0$, we find that the real part of the differential equation is given by 
\begin{equation}\label{real_part_sub}
    \rho'(t) = \re \beta_1(\lambda)\rho(t) + \re P(\lambda) \rho(t)^3 + o(|z|^3).
\end{equation}
At $\lambda = \lambda_0$, $\re \beta_1(\lambda_0) = 0$, so clearly $\rho = 0$ is an asymptotically stable equilibrium point if $\re P < 0$, and an unstable equilibrium point if $\re P > 0$. Thus the transition is continuous if $\re P < 0$ and catastrophic if $\re P > 0$. By the Hopf bifurcation theorem, the stable equilibrium must bifurcate to a periodic orbit for $\lambda > \lambda_0$ if $\re P > 0$. From \cref{real_part_sub}, we see that $z = \frac{-\Re \beta_1(\lambda)}{\re P}^{1/2} e^{i \gamma(t)} + o(\Re \beta)^{1/2}$. Taking the imaginary part in the substitution, we find 
\[\gamma'(t) = \im \beta + \im P \rho^2(t),\]
from which it follows that $\gamma(t) = \omega t$. 
\end{proof}

\begin{theorem}\label{th:double_hopf}
Consider a system in the form of \cref{dynamic_system} that satisfies (\ref{completey_continuous_cond}). Assume that there are 2 pairs of conjugate critical eigenvalues, $\beta_1$, $\beta_2$, $\ol \beta_1$, and $\ol \beta_2$, and that assumption (\ref{PES}) holds. Further assume that the solution of the system projected onto the critical eigenvectors $\phi_1$ and $\phi_2$ for sufficiently small initial value and $\lambda$ near $\lambda_0$ satisfies
\begin{gather}
    \frac{dz_1}{dt} = \beta_1(\lambda) z_1 + z_1(A(\lambda)|z_1|^2 + B(\lambda)|z_2|^2) + o((|z_1| + |z_2|)^3) \label{4crossclass1}\\
    \frac{dz_2}{dt} = \beta_2(\lambda) z_2 + z_2(C(\lambda)|z_1|^2 + D(\lambda)|z_2|^2) + o((|z_1| + |z_2|)^3) \label{4crossclass2}
\end{gather}
where $z_1$ and $z_2$ are the amplitude of projection onto $\phi_1$ and $\phi_2$ respectively. $A$, $B$, $C$, and $D$ are continuously differentiable near $\lambda = \lambda_0$, and evaluated at $\lambda_0$, these are called the transition numbers, and we define $m_1 = -\re A/ \re B$ and $m_2 = -\re C / \re D$. The transition is then completely characterized by the dynamics of the reduced system (\ref{4crossclass1}) - (\ref{4crossclass2}), and we have the following classification:

\begin{enumerate}[label = (\roman*)]
    \item If $\re A < 0$ and $\re B < 0$, then 
    \begin{enumerate}
        \item if $\re C < 0$ and $\re D < 0$, then the transition is continuous,
        \item if $\re C < 0$ and $\re D > 0$, then the transition is mixed if $\re A - \re C > 0$ and catastrophic if $\re A - \re C \le 0$,
        
        \item if $\re C > 0$ and $\re D < 0$, then the transition is continuous,
        
        \item if $\re C > 0$ and $\re D > 0$, then the transition is catastrophic.
    \end{enumerate}
    
    \item If $\re A < 0$ and $\re B > 0$, then 
    \begin{enumerate}
        \item if $\re C < 0$ and $\re D < 0$, then the transition is continuous,
        \item if $\re C > 0$ and $\re D < 0$, then the transition is continuous if $m_1 > m_2$ and catastrophic if $m_1 < m_2$. 
        
        \item if $\re C < 0$ and $\re D > 0$, then the transition is continuous if $m_1 < m_2$. Furthermore if $m_2 > m_1$, then the transition is mixed if $\frac{\re A - \re C}{\re D - \re B} > 0$, and catastrophic if $\frac{\re A - \re C}{\re D - \re B} < 0$. 
        
        \item if $\re C > 0$ and $\re D > 0$, then the transition is catastrophic. 
    \end{enumerate}
    
    \item If $\re A > 0$ and $\re B < 0$, then
    \begin{enumerate}
        \item if $\re C < 0$ and $\re D < 0$, the transition is mixed if $\re D - \re B > 0$ and catastrophic otherwise. 
        
        \item if $\re C < 0$ and $\re D > 0$, then the transition is catastrophic if $m_1 > m_2$.
        
        \item if $\re C > 0$ and $\re D < 0$, then the transition is mixed if $\frac{\re D - \re B}{\re A - \re C} > 0$ and catastrophic otherwise. 
        
        \item if $\re C > 0$ and $\re D > 0$, then the transition is catastrophic. 
    \end{enumerate}
    
    \item if $\re A > 0$ and $\re B > 0$, then the transition is catastrophic. 
\end{enumerate}
Furthermore, if the transition is continuous, the trivial stable equilibrium point bifurcates to an $S^3$ attractor for $\lambda > \lambda_0$. 
\end{theorem}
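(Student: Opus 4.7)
The strategy is to reduce the four-dimensional amplitude system to a planar flow in the squared moduli $u=|z_1|^2$, $v=|z_2|^2$, classify its phase portrait in the closed first quadrant according to the signs of $\re A,\re B,\re C,\re D$, and then recover the topology of the bifurcating attractor from Ma--Wang's attractor bifurcation theorem.

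Writing $z_j=\rho_j e^{i\theta_j}$ in \eqref{4crossclass1}--\eqref{4crossclass2} and using that the cubic truncation depends only on $|z_j|^2$, the real parts decouple from the phases and yield
\[
\begin{aligned}
\dot\rho_1 &= \rho_1\bigl[\re\beta_1(\lambda)+(\re A)\rho_1^2+(\re B)\rho_2^2\bigr]+o(\rho^3),\\
\dot\rho_2 &= \rho_2\bigl[\re\beta_2(\lambda)+(\re C)\rho_1^2+(\re D)\rho_2^2\bigr]+o(\rho^3),
\end{aligned}
\]
while the phases drift at rates $\im\beta_j+O(\rho^2)$. Substituting $u=\rho_1^2$, $v=\rho_2^2$ converts this into a planar polynomial flow in the first quadrant; this is the object I would analyze.

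Next I would enumerate the equilibria of the planar flow: the origin, the axial equilibria $(-\re\beta_1/\re A,\,0)$ and $(0,\,-\re\beta_2/\re D)$ when they lie in the quadrant, and possibly an interior mixed-mode equilibrium obtained by intersecting the nullclines $\re A\,u+\re B\,v=-\re\beta_1$ and $\re C\,u+\re D\,v=-\re\beta_2$. At $\lambda=\lambda_0$ the nullclines degenerate to rays through the origin with slopes $m_1=-\re A/\re B$ and $m_2=-\re C/\re D$, so the ordering of $m_1$ and $m_2$ controls whether and on which side an interior equilibrium emerges for $\lambda$ slightly above $\lambda_0$. Linearization at each equilibrium gives its type, and the separatrices sweep the first quadrant into basins. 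Running through the twelve subcases is then a direct computation: the transition is continuous iff every trajectory starting near the origin in the first quadrant relaxes to the origin at $\lambda=\lambda_0$; catastrophic iff none does; and mixed iff a proper sector returns while the complement escapes. The auxiliary quantities $\re A-\re C$, $\re D-\re B$, and the ordering of $m_1,m_2$ are precisely what select which sector gets absorbed. Because the radial flow is hyperbolic away from the origin and the phases drift independently, the cubic truncation dictates the full dynamics on the center manifold modulo the higher-order remainder.

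For the final clause, note that in each continuous case the origin is a repeller on the four-dimensional center manifold for $\lambda>\lambda_0$, with both complex eigenvalue pairs crossing into the right half plane and the radial flow being globally contracting outside a small neighborhood. Ma--Wang's attractor bifurcation theorem then produces a local bifurcating attractor carrying the homotopy type of the unit sphere of $\mathbb{C}^2$, namely $S^3$; the radial attractor (an axial equilibrium, an interior equilibrium, or a heteroclinic cycle between them) combined with the independent drift of $\theta_1,\theta_2$ is the explicit support. The principal obstacle is the case bookkeeping: for each of the twelve sign patterns one must check that the cubic truncation already determines the basin structure in the first quadrant, that the announced ratio conditions are sharp, and that the $o((|z_1|+|z_2|)^3)$ remainder cannot create spurious attractors or move the separatrices qualitatively. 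I would restrict attention to the generic hyperbolic regime and exclude the measure-zero degenerate boundary cases such as $m_1=m_2$ or one of $\re A,\re B,\re C,\re D$ vanishing.
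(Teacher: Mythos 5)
Your plan is essentially the paper's proof: pass to the squared amplitudes $\rho_j=|z_j|^2$, which at $\lambda=\lambda_0$ yields a planar flow on the closed first quadrant whose nullclines are the rays of slopes $m_1,m_2$ through the origin, classify continuous/catastrophic/mixed by whether all, none, or some near-origin trajectories relax to the origin (using the sector analysis and the auxiliary ray of slope $(\re A-\re C)/(\re D-\re B)$), and invoke Ma--Wang's attractor bifurcation theorem for the $S^3$ structure. The only differences are presentational (you phrase the case analysis via equilibria and separatrices where the paper argues directly with signs of $\dot\rho_1,\dot\rho_2$ in each sector), so no further comparison is needed.
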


\begin{proof}
We employ the same change of variables as \cite{double_hopf}. Let
\[z_1(t) = \sqrt{\rho_1(t)} e^{i\gamma_2(t)} \qquad z_2(t) = \sqrt{\rho_2(t)} e^{i \gamma_2(t)}\]
where $\gamma_i$ and $\rho_i$ for $i = 1, 2$ are real functions, and $\rho_i(t) \ge 0$. Then substituting into \cref{4crossclass1} and \cref{4crossclass2} then taking the real part, we find that 
\begin{gather*}
    \dot \rho_1 = 2 \rho_1(\re \beta_1 + \re(A) \rho_1 + \re(B) \rho_2 + O(\rho_1^2 + \rho_2^2)), \\
    \dot \rho_2 = 2 \rho_2(\re \beta_1 + \re(C) \rho_1 + \re(D) \rho_2 + O(\rho_1^2 + \rho_2^2)).
\end{gather*}
Note that since $\dot \rho_1|_{\rho_1 = 0} = 0$ and $\dot \rho_2|_{\rho_2 = 0} = 0$, any initial value of $(\rho_1, \rho_2)$ will remain in the first quadrant. It is clear that if all trajectories with initial value in the first quadrant tend to $\infty$ as $t \to \infty$, then the transition will be catastrophic, and if all such trajectories tend to $0$, then the transition will be continuous. Otherwise it is mixed. 

\begin{enumerate}[label = (\roman*)]
    \item First consider the case $\re A, \re B < 0$. Then $\dot \rho_1 < 0$ in the first quadrant. Thus if $\re C, \re D < 0$, we have a continuous transition, and if $\re C, \re D < 0$, we have a catastrophic transition. 
    
    Now suppose $\re C > 0$ and $\re D < 0$. Then on the left side of the line $\rho_2 = m_2 \rho_1$, $\dot \rho_2 < 0$, and on the right side $\dot \rho_2 > 0$. Clearly every trajectory that starts on the left side tends to $0$. Since $|\dot \rho_1|$ increases as $\rho_2$ increases, every trajectory that starts on the right side must enter the left side, and thus tends to $0$ as $t \to \infty$. 
    
    Finally consider $\re C < 0$ and $\re D > 0$. In this case, on left side of the line $\rho_2 = m_2 \rho_1$, we have $\dot \rho_2 > 0$ and on the right we have $\dot \rho_2 < 0$. Any solution starting on the left side of the line must then tend to $\infty$. If $\re A - \re C > 0$, then the line $\rho_2 = k \rho_1$ where $k = \frac{\re A - \re C}{\re D - \re B}$ has positive slope and thus passes through the first quadrant. In particular, $k < m_2$, along this line, $\dot \rho_2/\dot \rho_1 = k$. Therefore any trajectory with initial value below this line must tend to $0$, hence the transition is mixed. On the other hand, if $\re A - \re C \le 0$, then $\dot \rho_2/\dot \rho_1 < \rho_2/\rho_1$ for every point below the line $\rho_2 = m_2 \rho_1$. Therefore all such trajectory must enter the region above the line at some $t > 0$, thus the transition must be catestrophic.

    \item Now consider the case $\re A < 0$ and $\re B > 0$. Then on the left side of the line $\rho_2 = m_2 \rho_1$, $\dot \rho_1 > 0$, and on the right side, $\dot \rho_2 < 0$. If $\re C < 0$ and $\re D < 0$, this is identical to case 1(c), so the transition is continuous.
    
    If $\re C > 0$ and $\re D < 0$, then above the line $\rho_2 = m_2 \rho_1$, $\dot \rho_2 < 0$, and below, we have $\dot \rho_2 > 0$. In \cref{fig:m1_m2_interaction}, we clearly see that there are two cases depending on if $m_1 < m_2$ or $m_1 < m_2$. The first quadrant is divided into three regions by the two lines. Label the regions I, II, and III from left to right. In either case, initial values in regions I and III will end up in region II. In the case $m_1 < m_2$, initial values in region II will tend to $\infty$, and in the case $m_1 > m_2$, initial values in region II will tend to $0$. Therefore we have catastrophic transition in the former case and continuous in the latter. 
    
    \begin{figure}
        \centering
        \includegraphics[scale = 0.5]{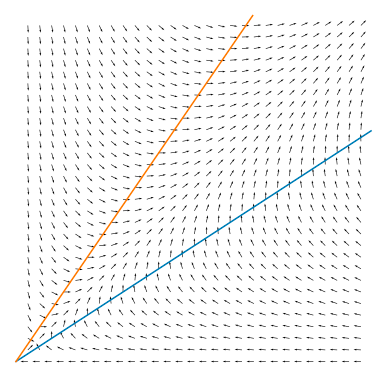}
        \includegraphics[scale = 0.5]{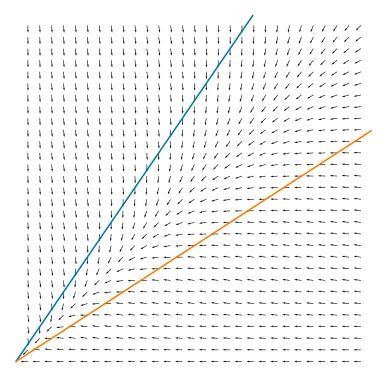}
        \caption{These vector fields show the general shape of the case $\re C > 0$ and $\re D < 0$. The blue line is the line $\rho_2 = m_1 \rho_1$ and the orange line is the line $\rho_2 = m_2 \rho_1$.}
        \label{fig:m1_m2_interaction}
    \end{figure}
    
    A combination of the analysis of 2(b) and 1(b) proves 2(c). Finally if $\re C > 0$ and $\re D > 0$, the trajectory will necessarily tend to $\infty$ along the $\rho_2$ direction, hence it is catastrophic. 
\end{enumerate}
(iii) and (iv) can be shown through identical analysis. See \cite{ma2005bifurcation} for a proof of the $S^3$ structure of the bifurcated attractor. 
\end{proof}

For the sake of completeness completeness, \cref{th:double_hopf} fully characterized every possible combination of the transition number. We will see in \cref{section:double} that our Swift-Hohenberg system with dispersion will only require analysis of a subset of these combinations.

Now to characterize the transitions of \cref{SH_system}, we must identify the eigenvalues that first cross the imaginary axis of the associated operator $L_\lambda$ defined in \cref{operator}. Expanding $L_\lambda u = \beta(\lambda) u$ as a Fourier series, we find that we have eigenvalues
\[\beta_n(\lambda) = \lambda - \left(1 - \frac{4 \pi^2 n^2}{\ell^2} \right)^2 - i \sigma \frac{8 \pi^3}{\ell^3} n^3\]
with corresponding eigenvectors 
\[\phi_n(x) = e^{2 \pi i nx/\ell}.\]
for $n \in \Z$. It is clear that $\argmax_{n \in \Z} \re \beta_n(0)$ is always a finite set with some cardinality $m(\ell)$, and it consists of the first eigenvalues to cross the imaginary axis (see \cref{fig:eig_plot}).
\begin{figure}
    \centering
    \includegraphics[scale = 0.7]{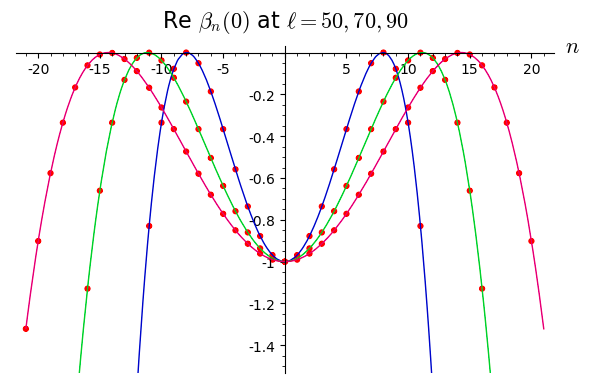}
    \caption{Plot of $\re \beta_n(0)$ for various $\ell$. One can see from the figure that for large $\ell$, the maximas over $n \in \Z$ are attained at one or two pairs of conjugate eigenvalues. For sufficiently small $\ell$, $\beta_0(0)$ will be the maximum eigenvalue.}
    \label{fig:eig_plot}
\end{figure}
Observe that $\beta_0(0) = -1$, and for all $\ell < 2 \pi/\sqrt{2}$, $\re \beta_n (0) < -1$ for all $n \neq 0$, and for $\ell > 2 \pi/2$, $\re \beta_1(0) > -1$. Therefore
\[\begin{cases} m(\ell) = 1 & \ell < 2 \pi/\sqrt{2} \\ m(\ell) \ge 2 & \ell \ge 2\pi/\sqrt{2} \end{cases}.\]
In particular, $m(2 \pi/\sqrt{2}) = 3$ since $\re \beta_1 = \re \beta_{-1} = \re \beta_0 = -1$. When $\ell > 2\pi / \sqrt{2}$, note that $\re \beta_1(0) > -1$, so for such $\ell$, we either have one or two pairs of conjugate eigenvalues that first cross the imaginary axis as $\lambda$ increases. We then define the following partition of $\R^+$:
\begin{definition}
Let $\mathcal I_1 \cup \mathcal I_2 \cup \mathcal I_3 \cup \mathcal I_4 = \R^+$ so that for any $\ell \in \mathcal I_i$, $m(\ell) = i$. In particular, $\mathcal I_1$ is an open interval, $\mathcal I_2$ is a union of open intervals, $\mathcal I_3$ is a single point, and $\mathcal I_4$ is a discrete set (see \cref{fig:partition}). 
\end{definition}

\begin{figure}
    \centering
    \includegraphics[scale = 0.7]{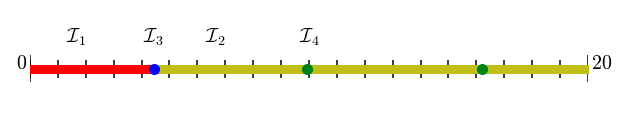}
    \caption{A visual of the partition. $\mathcal I_1$, $\mathcal I_2$, $\mathcal I_3$, and $\mathcal I_4$ are encoded by the colors red, yellow, blue, and green respectively.}
    \label{fig:partition}
\end{figure}
For convenience, we define the inverted length
\[\rho \coloneqq \frac{2 \pi}{\ell}\]
If $\beta_k(0) \in \argmax_{n \in \Z} \re \beta_n(0)$, then at $\lambda_0 = (1 - (k \rho)^2)^2$, then we have a dynamic transition. Now we are ready to state the main theorem.
\begin{theorem}\label{MAIN}
The Swift-Hohenberg equation with third-order dispersion always undergoes a dynamic transition across $\lambda = \lambda_0$, and the transitions can be characterized by the following:
\begin{enumerate}[label = (\roman*)]
    \item if $\ell \in \mathcal I_1$, then the transition is mixed if $b \neq 0$, and is continuous if $b = 0$. In particular, if $b = 0$, the basic state at $u = 0$ will bifurcate to two stable equilibrium points for $\lambda > \lambda_0 = 1$.
    
    \item if $\ell \in \mathcal I_2$, then there exists $k$ such that $\re \beta_k(\lambda_0) = 0$ where $\lambda_0 = (1 - (k \rho)^2)^2$. The transition is continuous if 
    \[\frac{2b^2(15 (k\rho)^4 - 6 (k\rho)^2)}{(15 (k\rho)^4 - 6 (k\rho)^2)^2 + (6 \sigma (k\rho)^3)^2} + \frac{4 b^2}{1 - \lambda_0} - 3 < 0,\]
    and catastrophic if the inequality is reversed. If continuous, the basic attractor at $u = 0$ bifurcates to a stable periodic orbit for $\lambda > \lambda_0$, and if catastrophic, the basic attractor at $u = 0$ bifurcates to an unstable periodic orbit for $\lambda < \lambda_0$. 
    
    \item if $\ell \in \mathcal I_3$, then the transition is mixed if $b \neq 0$, and is continuous if $b = 0$. In particular, if $b = 0$, the basic state at $u = 0$ will bifurcate to an $S^2$ attractor $\Sigma_\lambda$ for $\lambda > \lambda_0 = 1$. Furthermore, $\Sigma_\lambda$ contains two stable equilibrium points and an unstable periodic orbit. 
    
    \item if $\ell \in \mathcal I_4$, then there exists $k$ such that $\re \beta_i(\lambda_0) = 0$, $i = k, k + 1$ with $\rho^2 = 2/(k^2 + (k + 1)^2)$ and $\lambda_0 = (1 - (k \rho)^2)^2$. Define parameters
    \begin{gather}
        \eta_1 = \frac{4b^2}{1 - \lambda_0} + \frac{2b^2 (15 (k\rho)^4 - 6(k\rho)^2)}{(15 (k\rho)^4 - 6(k\rho)^2)^2 + (6 \sigma (k\rho)^3)^2} - 3 \label{eta1}\\
        \eta_2 = \frac{4b^2}{1 - \lambda_0} + \frac{4b^2((2k^2 - 2) \rho^2 - (k^4 - 1)\rho^4)}{((2k^2 - 2) \rho^2 - (k^4 - 1)\rho^4)^2 + (\sigma \rho^3(k^2 + 3k))^2} - 6 \\
        \eta_3 = \frac{4b^2}{1 - \lambda_0} + \frac{2b^2 (15 (k\rho + \rho)^4 - 6(k\rho + \rho)^2)}{(15 (k\rho + \rho)^4 - 6(k\rho + \rho)^2)^2 + (6 \sigma (k\rho + \rho)^3)^2} - 3. \label{eta2}
    \end{gather}
    If $k \ge 2$, then the transitions are as follows:
    
\begin{table}[h]
\begin{tabular}{|l|l|l|}
\hline
                           & $\eta_2 > 0$                                                                                               & $\eta_2 < 0$                                                                                               \\ \hline
$\eta_1 > 0$, $\eta_3 > 0$ & catastrophic                                                                                               & \begin{tabular}[c]{@{}l@{}}continuous if $\eta_1 \eta_3 < \eta_2^2$,\\ catastrophic otherwise\end{tabular} \\ \hline
$\eta_1 > 0$, $\eta_3 < 0$ & catastrophic                                                                                               & \begin{tabular}[c]{@{}l@{}}mixed if $\eta_3 > \eta_2$\\ catastrophic otherwise\end{tabular}                \\ \hline
$\eta_1 < 0$, $\eta_3 < 0$ & \begin{tabular}[c]{@{}l@{}}continuous if $\eta_1 \eta_3 > \eta_2^2$,\\ catastrophic otherwise\end{tabular} & continuous                                                                                                 \\ \hline
\end{tabular}
\end{table}
    
    If $k = 1$, then the transition is continuous if $\eta_3 < 0$ and catastrophic if $\eta_3 > 0$. If the transition is continuous, then $u = 0$ bifurcates to an $S^3$ attractor for $\lambda > \lambda_0$. 
    
\end{enumerate}
\end{theorem}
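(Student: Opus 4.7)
The plan is to perform a center-manifold reduction at each critical value $\lambda_0$ and then apply \cref{th:single_hopf}, \cref{th:double_hopf}, or a direct phase-plane analysis depending on the dimension of the critical subspace. The quadratic part of the center manifold takes the form $\Phi_m = \sum c^m_{ij} z_i z_j$ indexed over non-critical modes $\phi_m$, with coefficients determined by the Sylvester-type relation $c^m_{ij}(\beta_i + \beta_j - \beta_m) = b$ whenever the critical pair $(\phi_i, \phi_j)$ contributes to the $\phi_m$-coefficient of $u_c^2$, where $u_c$ denotes the critical part of $u$. Substituting $u = u_c + \Phi$ into $bu^2 - u^3$ and projecting onto the critical eigenspace yields the reduced cubic normal form.

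\emph{Cases (i) and (ii).} When $\ell \in \mathcal I_1$, only $\phi_0 \equiv 1$ is critical and $\phi_0^k = \phi_0$ forces the reduced ODE to be $\dot z = (\lambda-1)z + bz^2 - z^3$, which is analyzed directly to obtain a pitchfork (continuous) for $b = 0$ and a transcritical-like configuration (mixed) for $b \neq 0$, where one bifurcated fixed point stays close to $0$ while the other escapes to $z \approx b$. When $\ell \in \mathcal I_2$, only $\phi_0$ and $\phi_{\pm 2k}$ are excited at quadratic order, producing the transition number $P = -4b^2/\beta_0 + 2b^2/(2\beta_k - \beta_{2k}) - 3$ (with $\beta$ evaluated at $\lambda_0$). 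A direct computation gives $2\beta_k - \beta_{2k} = (15(k\rho)^4 - 6(k\rho)^2) + 6i\sigma(k\rho)^3$ at $\lambda_0$, so $\re P$ matches the stated inequality exactly and \cref{th:single_hopf} yields the dichotomy.

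\emph{Case (iv).} For $\ell \in \mathcal I_4$ with $k \geq 2$, the non-critical modes excited by quadratic interactions of the critical pairs $\phi_{\pm k}, \phi_{\pm(k+1)}$ are $\phi_0, \phi_{\pm 2k}, \phi_{\pm(2k+1)}, \phi_{\pm(2k+2)}, \phi_{\pm 1}$; assembling the cubic normal form produces the transition numbers $A, B, C, D$ of \cref{th:double_hopf}, which identify with $\re A = \eta_1$, $\re D = \eta_3$, and $\re B = \re C = \eta_2$ (the last equality following from the $k \leftrightarrow k+1$ symmetry between the two critical pairs). Plugging these identifications into the classification table of \cref{th:double_hopf} reproduces the table in the statement, after verifying which of the sixteen combinatorial boxes actually arise. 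For $k = 1$, the quadratic product $\phi_{-1}\phi_2 = \phi_1$ lies in the critical set, producing a resonant quadratic term $2b\bar z_1 z_2$ in $\dot z_1$; this falls outside the framework of \cref{th:double_hopf}, and an ad hoc analysis of the augmented reduced system (exploiting the $S^1$-rotation symmetry from the dispersion to factor out the angular variables) shows that the dichotomy depends only on the sign of $\eta_3$.

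\emph{Case (iii) and main obstacle.} At $\ell = 2\pi/\sqrt{2}$ the critical set $\{\phi_0, \phi_{\pm 1}\}$ contains multiple internal quadratic resonances (such as $\phi_1\phi_{-1} = \phi_0$), producing direct quadratic terms in the reduced ODE when $b \neq 0$ and leading to a mixed transition by the same mechanism as case (i); for $b = 0$ the purely cubic reduced system becomes $\dot z_0 = (\lambda-1)z_0 - z_0^3 - 6 z_0|z_1|^2$ and $\dot z_1 = \beta_1 z_1 - 3z_0^2 z_1 - 3 z_1|z_1|^2$, which after $z_1 = re^{i\theta}$ decouples into a planar $(z_0, r)$ system whose phase portrait identifies the claimed $S^2$ attractor (two stable equilibria on the $z_0$-axis together with the invariant circle arising from the planar saddle lifting to an unstable periodic orbit). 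The most arduous step of the proof is the case-(iv) bookkeeping of resonance denominators, where several non-critical modes contribute to each cross-interaction coefficient and one must verify the collapse to the single expression $\eta_2$; conceptually, the trickiest subcase is $k = 1$ of (iv), where the quadratic resonance violates the hypotheses of \cref{th:double_hopf} and forces a direct dynamical analysis of the enlarged normal form.
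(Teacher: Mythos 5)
Your overall strategy---Lyapunov--Perron approximation of the quadratic center manifold with resonance denominators $\beta_i+\beta_j-\beta_m$, projection onto the critical modes, and then an application of \cref{th:single_hopf} or \cref{th:double_hopf}---is exactly the route the paper takes. Your transition numbers agree with the paper's: in case (ii) your $P=-4b^2/\beta_0+2b^2/(2\beta_k-\beta_{2k})-3$ with $2\beta_k-\beta_{2k}=(15(k\rho)^4-6(k\rho)^2)+6i\sigma(k\rho)^3$ at $\lambda_0$ is the paper's formula, and in case (iv) with $k\ge 2$ the identifications $\re A=\eta_1$, $\re B=\re C=\eta_2$, $\re D=\eta_3$ match. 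Cases (i) and (iii) are likewise handled as in the paper (your cubic coefficients in (iii) are in fact the correct ones).

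The genuine gap is the $k=1$ subcase of (iv). You correctly observe that $\phi_{-1}\phi_2=\phi_1$ (and likewise $\phi_1^2=\phi_2$) places resonant quadratic terms---$2b\bar z_1 z_2$ in $\dot z_1$ and $bz_1^2$ in $\dot z_2$---directly into the reduced system, so that the hypotheses of \cref{th:double_hopf} fail; but you then assert, without argument, that an ad hoc analysis ``shows that the dichotomy depends only on the sign of $\eta_3$.'' That conclusion is not routine and does not follow from anything you have written. The resonant quadratic terms are of lower order than the cubic ones and generically dominate the local dynamics near $u=0$; the translation symmetry $z_1\mapsto e^{i\rho c}z_1$, $z_2\mapsto e^{2i\rho c}z_2$ that you invoke leaves precisely these $1{:}2$-resonance monomials equivariant, so it cannot be used to remove them, and it eliminates only one angular variable, leaving a genuinely three-dimensional amplitude--phase system $(r_1,r_2,\psi)$ with quadratic terms $r_1r_2\cos\psi$, $r_1^2\cos\psi$ competing against the cubics. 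Whether the transition is continuous, catastrophic, or mixed for $b\neq 0$ must be decided at this quadratic order (compare your own cases (i) and (iii), where a resonant quadratic term forces a mixed transition), so the claimed $\eta_3$-dichotomy is unproved as stated. For what it is worth, the paper handles $k=1$ by projecting these quadratic products out of the center-manifold integrand and then applying the cubic classification anyway, i.e.\ it sweeps the same difficulty under the rug; your proposal at least names the obstruction, but the enlarged normal-form analysis has to actually be carried out (or the claim restricted to $b=0$) before this part of the theorem can be considered established.
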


\begin{remark}
Due to the double Hopf structure, in the case that the transition for $\ell \in \mathcal{I}_4$ is continuous, the bifurcated attractor $\Sigma_\lambda$ contains two distinct periodic orbits as well as an invariant torus. The stability of these on the attractor depends on the transition parameters. See \cite{ma2005bifurcation} for more details. 
\end{remark}

\section{Dynamic Transitions}\label{section:dynamic_transitions}
Now we are ready to give a full characterization of the dynamic transitions of \cref{SH_system}. 
\subsection{Real Simple Transition}\label{section:real_simple}
In the case that $\ell \in S_1$ or $S_2$, we have real and complex simple eigenvalue transitions respectively. First consider the case that $m(\ell) = 1$.
\begin{proposition}
If $\ell < 2 \pi/\sqrt{2}$, then the transition at $\lambda_0$ is mixed if $b \neq 0$, and is continuous if $b = 0$. 
\end{proposition}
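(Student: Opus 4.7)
The plan is to reduce \cref{SH_system} to a one-dimensional ODE on the center manifold and analyze its dynamics near $z=0$. Since $\ell < 2\pi/\sqrt{2}$, the formula $\beta_n(\lambda) = \lambda - (1 - (n\rho)^2)^2 - i\sigma(n\rho)^3$ identifies $\beta_0(\lambda) = \lambda - 1$ as the unique real simple critical eigenvalue, with eigenvector $\phi_0 \equiv 1$, so that $\lambda_0 = 1$ and all other modes decay at an $\ell$-dependent rate bounded away from zero at $\lambda_0$.

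Writing $u = z\,\phi_0 + \Psi(z,\lambda)$ with $\Psi$ valued in the stable subspace (the mean-zero functions in $L^2(\T)$), the reduced dynamics are obtained by projecting $G(u) = bu^2 - u^3$ onto the constants via the $L^2$ average. The key observation is that for a constant $u = z$, the nonlinearity $G(u) = bz^2 - z^3$ is itself constant, hence already lies entirely in the critical subspace; the quadratic contribution of $\Psi$ therefore vanishes, giving $\Psi(z,\lambda) = O(z^3)$. Consequently the reduced dynamics take the form
\be
\dot z = (\lambda - 1) z + b z^2 - z^3 + O(z^4).
\ee

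It remains to analyze this 1D ODE near $\lambda_0 = 1$. If $b = 0$, the reduced equation is a standard supercritical pitchfork: for $\lambda > 1$ the origin becomes unstable and two stable equilibria $z = \pm\sqrt{\lambda - 1} + O(\lambda-1)$ emerge, whose basins cover a neighborhood of $0$ and whose norm vanishes as $\lambda \to 1^+$, yielding a continuous transition. If $b \neq 0$ (WLOG $b > 0$, the other sign being symmetric under $z \mapsto -z$), the nontrivial equilibria are $z_\pm = \tfrac{1}{2}\bigl(b \pm \sqrt{b^2 + 4(\lambda-1)}\bigr)$; for small $\lambda - 1 > 0$ one finds $z_+ \approx b$ bounded away from $0$ while $z_- \approx -(\lambda-1)/b \to 0^-$, and linearization of $(\lambda - 1) + 2bz - 3z^2$ shows that both $z_\pm$ are stable while $z=0$ is unstable. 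A neighborhood of $0$ then decomposes into the basins of $z_-$ and $z_+$, which furnishes exactly the partition $U^1_\lambda, U^2_\lambda$ required by the definition of a mixed transition.

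The principal technical step is the center manifold reduction, specifically confirming that no quadratic term of $\Psi$ feeds back into the $z^2$-coefficient of the reduced equation; this is immediate here because $G$ maps constants to constants, so the only potential corrections are $O(z^4)$ and cannot destabilize the small equilibrium $z_-$ for $\lambda$ sufficiently close to $\lambda_0$. Once this is in hand, the remaining phase-line analysis is elementary.
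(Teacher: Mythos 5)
Your proposal is correct and follows essentially the same route as the paper: center manifold reduction to the one-dimensional dynamics of the spatially constant mode, followed by an elementary phase-line analysis of $\dot z = (\lambda-1)z + bz^2 - z^3$. The only cosmetic differences are that the paper notes the constants form an exactly invariant subspace (so the center manifold function is identically zero, not merely $O(z^3)$) and reads off the transition type from the reduced equation at $\lambda = \lambda_0$ itself, whereas you track the bifurcated equilibria $z_\pm$ for $\lambda > \lambda_0$; both yield the same classification.
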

\begin{proof}
The center manifold is trivial. The trajectories on the center manifold simply consists of the solutions that depend only on time. At the transition $\lambda_0 = 1$, projecting onto the center manifold then gives the reduced system
\[\frac{du}{dt} = b u^2 - u^3\]
which implies that if $b > 0$, solutions with initial value $u(0) < 0$ will tend toward the fixed point at $u = 0$, and solutions with initial vaue $u(0) > 0$ will be repelled away. If $b < 0$ then we have the opposite behavior. Thus the transition is mixed for $b \neq 0$. If $b = 0$, $u = 0$ is then an asymptotically stable fixed point. Thus the transition will be continuous if $b = 0$.  
\end{proof}

\subsection{Complex Simple Transition} \label{section:complex_simple}

Now consider the transition from complex simple eigenvalues. There will be a pair of conjugate eigenvalues that cross the imaginary axis as $\lambda$ crosses $\lambda_0$. Then there exists $\lambda_0 > 0$ and $k > 0$ so that $\beta_k = \overline{\beta}_{-k}$ and  
\[\begin{array}{ll}
    \re(\beta_i) = \begin{cases} < 0 & \text{if } \lambda < \lambda_0 \\ = 0 & \text{if } \lambda = \lambda_0 \\ > 0 & \text{if } \lambda > \lambda_0 \end{cases} & \qquad i = \pm k\\
    \re(\beta_i) < 0 & \qquad i \neq \pm k
\end{array}\]
Similar to before, we can split into subspaces
\[\begin{array}{c} E_1 = \text{span}\{z \phi_k + \overline{z \phi_1} : z \in \C\} \\ E_2 = E_1^\perp \quad \text{in } H^4(\T) \\ \overline{E}_2 \quad \text{is the closure in } L^2(\T) \end{array},\]
and split the operator into $L_\lambda = \mathcal{J}_\lambda \oplus \mathcal{L}_\lambda$ so that $\mathcal{J}_\lambda = \text{Diag}(\beta_k, \overline{\beta}_k)$. 
\begin{proposition}
The transition number for all $\ell \in S_2$ is given by 
\begin{equation}
    P = \frac{2b^2}{(15 ^4 - 6 (k\rho)^2) + 6 \sigma (k\rho)^3 i} + \frac{4b^2}{1 - \lambda_0} - 3.
\end{equation}
Thus if $b$ and $\sigma$ are such that 
\begin{equation}
    \frac{2b^2(15 (k\rho)^4 - 6 (k\rho)^2)}{(15 (k\rho)^4 - 6 (k\rho)^2)^2 + (6 \sigma (k\rho)^3)^2} + \frac{4 b^2}{1 - \lambda_0} - 3 < 0
\end{equation}
then the transition at $\lambda = \lambda_0$ will be continuous, for the reverse inequality, the transition will be catastrophic. 
\end{proposition}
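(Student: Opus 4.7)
The plan is to invoke \cref{th:single_hopf}, which reduces the task to computing the coefficient $P$ of $z|z|^2$ in the normal form of the flow restricted to the two-dimensional center manifold tangent to $E_1 = \Span\{\phi_k, \phi_{-k}\}$. I would write any solution near $0$ as
\begin{equation*}
u = z\phi_k + \bar z\phi_{-k} + \Phi(z,\bar z),
\end{equation*}
with $\Phi: E_1 \to E_2$ satisfying $\Phi(0) = 0$ and $D\Phi(0) = 0$. It suffices to determine $\Phi$ only to quadratic order in $(z, \bar z)$, because together with the two modes $\phi_{\pm k}$ this is exactly what the cubic resonance requires.

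For $\Phi_2$, I would use the ansatz $\Phi_2 = a\, z^2 \phi_{2k} + c\,|z|^2 \phi_0 + \bar a\, \bar z^2 \phi_{-2k}$ (conjugate symmetry, since the nonlinearity is real) and substitute into the invariance equation
\begin{equation*}
D\Phi_2 \cdot (\dot z, \dot{\bar z}) = L_{\lambda_0}\Phi_2 + b(z\phi_k + \bar z\phi_{-k})^2
\end{equation*}
projected onto $E_2$. Matching the $z^2$, $|z|^2$, and $\bar z^2$ monomials, and using $L_{\lambda_0}\phi_n = \beta_n(\lambda_0)\phi_n$, gives
\begin{equation*}
a = \frac{-b}{\beta_{2k}(\lambda_0) - 2\beta_k(\lambda_0)}, \qquad c = \frac{-2b}{\beta_0(\lambda_0)}.
\end{equation*}
The crucial subtlety here is that the transport term $D\Phi_2 \cdot (\dot z, \dot{\bar z})$ introduces the shift $-2\beta_k(\lambda_0) = 2i\sigma(k\rho)^3$ in the denominator of $a$, which converts the naive imaginary part $-8i\sigma(k\rho)^3$ of $\beta_{2k}(\lambda_0)$ into the $-6i\sigma(k\rho)^3$ appearing in the claimed formula.

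Finally, I would project the full dynamics onto $\phi_k$; the resonant $z|z|^2$ terms in $G(u_c + \Phi_2) = b(u_c + \Phi_2)^2 - (u_c + \Phi_2)^3$ arise from two sources. First, the cube $(z\phi_k + \bar z\phi_{-k})^3$ contains $3z|z|^2\phi_k$, contributing $-3$ from $-u^3$. Second, the cross term $2b(z\phi_k + \bar z\phi_{-k})\Phi_2$ has two products landing in the $\phi_k$-mode, namely $z\phi_k \cdot c|z|^2\phi_0$ and $\bar z\phi_{-k} \cdot a z^2\phi_{2k}$, which together contribute $2b(a + c)z|z|^2$. Summing gives $P = 2b(a + c) - 3$, and substituting the expressions for $a, c$ together with $\beta_0(\lambda_0) = \lambda_0 - 1$ produces exactly the stated formula. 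The classification then follows immediately from \cref{th:single_hopf}, with the sign of $\re P$ determining continuous versus catastrophic. The main obstacle is the careful Fourier bookkeeping of which mode products populate $\phi_k$, combined with the correct inclusion of the transport term in the invariance equation — dropping the latter would wrongly yield $\beta_{2k}$ rather than $\beta_{2k} - 2\beta_k$ in the denominator.
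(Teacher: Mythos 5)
Your proposal is correct and follows essentially the same route as the paper: reduce to the center manifold tangent to $\Span\{\phi_k,\phi_{-k}\}$, compute $\Phi$ to quadratic order (your coefficients $a=\frac{b}{2\beta_k-\beta_{2k}}$ and $c=\frac{2b}{\beta_k+\bar\beta_k-\beta_0}$ at $\lambda=\lambda_0$ agree with the paper's $A$ and $2B$), extract the resonant $z|z|^2$ coefficient from the cross term $2bu\Phi$ and the cube, and invoke \cref{th:single_hopf}. The only cosmetic difference is that you derive the quadratic coefficients from the invariance (homological) equation --- which is where your $2\beta_k$ shift in the denominator comes from --- whereas the paper obtains the identical denominators from the Lyapunov--Perron integral \cref{center_formula}.
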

\begin{proof}
We begin by computing the center manifold. The center manifold is locally given by the graph of some map $\Phi(\cdot, \lambda): E_1 \to \overline{E}_2$, which can be approximated by
\begin{equation}\label{center_formula}
    \Phi(u, \lambda) = \int_{-\infty}^0 e^{-\tau \mathcal{L}_\lambda} (k\rho)_\epsilon P_2 G_2 (e^{\tau \mathcal{J}_\lambda} u ) \, d\tau + o(\lVert u \rVert^2)
\end{equation}
where $P_2$ is the projection on to $\overline{E}_2$, $u = z \phi_k + \overline{z \phi_k}$ for some $z \in \C$, and $\rho_\epsilon$ is a cutoff function so that we have compact support in $E_1$. This formula is a generalization of the standard Lyapunov-Perron construction of invariant manifolds, see \cite{appendix, superconductivity,ma2005bifurcation} for more details. First observe that
\[P_2 G(e^{\tau \mathcal{J}_{\lambda}} u) = b e^{2 \tau \beta_k} z^2 \phi_k^2 + b e^{2 \tau \overline{\beta}_k} \overline{z^2 \phi_k^2} + 2b e^{\tau(\beta_k + \ol \beta_k)} z \phi_k \ol{z \phi_k}.\]
For $\lambda$ near $\lambda_0$, $\text{Re} (2\beta_k - \beta_i) > 0$ and $\text{Re}(\beta_k + \overline{\beta}_k - \beta_i) > 0$ for all $i \neq \pm k$. Therefore the integral in \cref{center_formula} converges. Then the center manifold function is given by 
\begin{align}
    \Phi(u, \lambda) =& \frac{b}{2 \beta_k - \beta_{2k}} z^2 \phi_k^2 + \frac{b}{\ol{2 \beta_k - \beta_{2k}}} \ol{z^2 \phi_k}^2 + \frac{2b}{\beta_k + \ol \beta_k - \beta_0} z \phi_k \ol{z\phi_k} + o ( \| u \|^2) \nonumber \\
    =& \frac{b}{[\lambda - (1 - (k\rho)^2)^2] - (6 (k\rho)^2 - 15(k\rho)^4) + 6 \sigma (k\rho)^3 i} z^2 \phi_k^2 \nonumber \\
    &+ \frac{b}{2[\lambda - (1 - (k\rho)^2)^2] + 1 - \lambda} |z|^2 + o(\|u\|^2) + c.c. \nonumber \\
    \eqqcolon & A z^2 \phi_k^2 + B |z|^2 + o(\lVert u \rVert^2) + c.c. \label{complex_center_func_2}
\end{align}
where $c.c.$ is the complex conjugate of everything before it. Let $\PP_k$ denote the projection onto $\phi_k$, then 
\begin{equation*}
    \PP_k G(z\phi_k + \ol z \ol \phi_k + \Phi(u, \lambda)) = 2b A z |z|^2 + 4b Bz |z|^2 - 3z|z|^2 + o(\lVert u \Vert^2)
\end{equation*}
Thus the dynamics on the center manifold projected onto $\phi_k$ at $\lambda = \lambda_0$ is given by the equation
\begin{align*}
    \frac{dz}{dt} =& \PP_k L_{\lambda_0}(z\phi_k + \ol z \ol \phi_k + \Phi(u, \lambda))+ \PP_k G(z\phi_k + \ol z \ol \phi_k + \Phi(u, \lambda)) \\
    =& \beta_k(\lambda_0) z + (2b A + 4b B - 3) z|z|^2 + o(\lVert u \rVert^3).
\end{align*}
This indeed satisfies all the assumptions of \cref{th:single_hopf}, and thus the transition number is given by $P = 2bA + 4b B - 3$. Note that 
\[\lambda_0 - (1 - (k\rho)^2)^2 = 0\]
is it is when the eigenvalues cross the imaginary axis. Therefore we obtain the proposition upon substitution. 
\end{proof}

Part (iii) of \cref{th:single_hopf} gives the bifurcated periodic solutions. So for continuous transitions, the system bifurcates to a stable periodic orbit for $\lambda > \lambda_0$, and for catastrophic transitions, the system bifurcates to an unstable periodic orbit for $\lambda < \lambda_0$.

\subsection{Double Real-Complex Transition}
Now we consider the single case of $\ell \in S_3$. Let $E_1 = \text{span}\{z_0 + z_1 \phi_1 + \ol{z_1 \phi_1} : z_0 \in \R, z_1 \in \C\}$, and define $E_2$ as before. Split $L_\lambda$ into $\mathcal J_\lambda$ and $\mathcal L_\lambda$ as before. 

\begin{proposition}
If $\ell \in S_3$, then the transition across $\lambda = \lambda_0$ is mixed for $b \neq 0$, and is continuous for $b = 0$.  
\end{proposition}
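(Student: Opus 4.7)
The plan is to perform a three-dimensional center manifold reduction onto
\[
E_1 = \Span\{1,\,\phi_1,\,\ol\phi_1\}, \qquad u\big|_{E_1} = z_0 + z_1\phi_1 + \ol z_1 \ol\phi_1, \qquad z_0\in\R,\ z_1\in\C,
\]
noting that at $\lambda_0 = 1$ the critical eigenvalues are $\beta_0(\lambda_0) = 0$ and $\beta_{\pm 1}(\lambda_0) = \mp i\omega$ with $\omega = 8\pi^3\sigma/\ell^3$, so $\mathcal{J}_{\lambda_0}$ has one zero real eigenvalue and a pair of purely imaginary eigenvalues.

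For $b \neq 0$, rather than carry out a full transition-number computation, I would exploit the structural observation that the subspace of spatially constant functions (the slice $z_1 \equiv 0$) is \emph{exactly} invariant under the full PDE \cref{SH_system}, since $\partial_x^k c = 0$ for $k\ge 1$ on constants. On this invariant one-dimensional subspace the flow is exactly
\be\label{eq:const_flow}
\dot z_0 = (\lambda-1)z_0 + b z_0^2 - z_0^3,
\ee
and at $\lambda = \lambda_0 = 1$ this reduces to $\dot z_0 = z_0^2(b-z_0)$, exhibiting a semistable fixed point at $z_0=0$. For $\lambda$ slightly above $\lambda_0$, \eqref{eq:const_flow} has a small stable equilibrium $z_0^\star \approx -(\lambda-1)/b$ bifurcating from $0$ on the side opposite to $\mathrm{sgn}(b)$, together with a persistent far stable equilibrium near $z_0=b$. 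A direct linearization on the three-dimensional center manifold at $(z_0^\star, 0)$ shows that this point is stable in the $z_1$ direction as well: the $z_1$-eigenvalue has real part $2b z_0^\star = -2(\lambda-1) < 0$, so $(z_0^\star,0)$ is a hyperbolic sink on the center manifold. Its basin of attraction provides $U^1_\lambda$ (initial data attracted to a nearby equilibrium whose magnitude vanishes as $\lambda\to\lambda_0^+$), while the remaining initial data in $U$ are attracted by the far equilibrium, giving $U^2_\lambda$ with $\limsup$-norm bounded away from $0$. Both sets are nonempty, so the transition is mixed.

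For $b = 0$ the quadratic part $G_2$ vanishes, so the center manifold function $\Phi$ is $o(\lVert u_c\rVert^2)$ and contributes no cubic terms to the reduced dynamics. A direct projection of $-u^3$ onto $E_1$ yields
\begin{align*}
\dot z_0 &= -z_0^3 - 6 z_0 |z_1|^2 + o(|z|^3), \\
\dot z_1 &= -i\omega z_1 - 3 z_0^2 z_1 - 3|z_1|^2 z_1 + o(|z|^3).
\end{align*}
The Lyapunov candidate $V(z_0,z_1) = z_0^2 + 2|z_1|^2$ satisfies
\[
\dot V = -2z_0^4 - 24 z_0^2 |z_1|^2 - 12|z_1|^4 + o(|z|^4),
\]
which is strictly negative in a punctured neighborhood of the origin. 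Hence $(0,0)$ is asymptotically stable on the center manifold at $\lambda = \lambda_0$, and by the Ma--Wang framework the transition is continuous.

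The principal delicacy in the $b\neq 0$ case is verifying that the partition of the constant invariant subspace lifts to a partition of a full neighborhood of $0$ in $H^4(\T)$. This follows because the bifurcated small equilibrium is a hyperbolic sink on the three-dimensional center manifold (so its basin of attraction is an open set containing a half-neighborhood of $0$ in $E_1$), and the center manifold attracts all nearby initial data from $\ol E_2$; standard local stable manifold theory then produces the decomposition $U = U^1_\lambda \cup U^2_\lambda$ required by the definition of a mixed transition.
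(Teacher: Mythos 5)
Your proposal is correct and takes essentially the same route as the paper: a three-dimensional center manifold reduction, with the mixed transition for $b\neq 0$ read off from the invariant constant-function direction $\dot z_0=(\lambda-1)z_0+bz_0^2-z_0^3$, and the continuous transition for $b=0$ from asymptotic stability of the origin at criticality (your Lyapunov function $V=z_0^2+2|z_1|^2$ is a cosmetic variant of the paper's polar-coordinate computation, and your cubic coefficients $-6z_0|z_1|^2$, $-3z_0^2z_1$, $-3|z_1|^2z_1$ are in fact the correct ones). The only slip is harmless: the $z_1$-eigenvalue at $(z_0^\star,0)$ has real part $\re\beta_1(\lambda)+2bz_0^\star=(\lambda-1)-2(\lambda-1)=-(\lambda-1)$, not $-2(\lambda-1)$, which is still negative, so the conclusion stands.
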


\begin{proof}
We express $u \in E_1$ as $u = z_0 + z_1 \phi_1 + \ol{z_1 \phi_1}$ where $z_0 \in \R$ and $z_1 \in \C$. Then 
\begin{align*}
    P_2 G_2(e^{\tau \mathcal J_\lambda} u) =& P_2\left[ b(e^{\tau \beta_0} z_0 + e^{\tau \beta_1} z_1 \phi_1 + e^{\tau \ol \beta_1} \ol z_1 \ol \phi_1)^2 \right] \\
    =& be^{2 \tau \beta_1} z_1^2 \phi_1^2 + be^{2 \tau \ol \beta_1} \ol z_1^2 \ol \phi_1^2
\end{align*}
Then the center manifold function is given by 
\begin{equation*}
    \Phi(u, \lambda) = \frac{b}{2 \beta_1 - \beta_2} z_1^2 \phi_1^2 + \frac{b}{2 \ol \beta_1 - \ol \beta_2} \ol z_1^2 \ol \phi_1^2 + o((|z_0| + |z_1|)^2)
\end{equation*}
Note that if we project the dynamics onto $\phi_0$, we find
\[\frac{dz_0}{dt} = \beta_0 z_0 + \PP_0(G(u + \Phi(u, \lambda))) = \beta_0 z_0 + b z_0^2 - z_0^3.\]
At $\lambda = \lambda_0 = 1$, this is the same situation as \cref{section:real_simple}. Therefore for $b > 0$, trajectories with initial value whose $\phi_0$ projection is positive will tend away from $0$, and trajectories with initial value whose $\phi_0$ projection is negative will tend toward $0$. For $b < 0$, we again have the opposite behavior. Thus the transition is mixed if $b \neq 0$.

If $b = 0$, then $\Phi(u, \lambda) = o((|z_0| + |z_1|)^2)$. Then projecting onto $\phi_0$ and $\phi_1$, we have the reduced system
\begin{align}
    & \frac{dz_0}{dt} = \beta_0 z_0 - z_0^3 \label{3proj_b=0_z0} \\ 
    & \frac{dz_1}{dt} = \beta_1 z_1 - z_0^2 z_1 - z_1 |z_1|^2 + o((|z_0| + |z_1|)^3). \label{3proj_b=0_z1}
\end{align}
Again we perform the change of variables to $z_1(t) = r(t) e^{i \gamma(t)}$ where $r(t) > 0$ and $\gamma$ is a real function. Then substituting into \cref{3proj_b=0_z1} and taking the real part at $\lambda = \lambda_0 = 1$, 
\[\frac{dr}{dt} = -z_0^2 r - r^3 + o((|z_0| + r)^3).\]
Since $r > 0$, this implies $\dot r < 0$. \cref{3proj_b=0_z0} implies that $\dot z_0 < 0$ as well, so $u = 0$ at $\lambda = \lambda_0$ is an asymptotically stable fixed point. Therefore the transition is continuous. 
\end{proof}

\subsection{Double Complex Transition}\label{section:double}
Fix $\ell$ so that for some $k \ge 1$, 
\[\begin{array}{ll}
    \re(\beta_i) = \begin{cases} < 0 & \text{if } \lambda < \lambda_0 \\ = 0 & \text{if } \lambda = \lambda_0 \\ > 0 & \text{if } \lambda > \lambda_0 \end{cases} & \qquad i = \pm k, \pm(k + 1)\\
    \re(\beta_i) < 0 & \qquad i \neq \pm k, \pm (k + 1)
\end{array}\]
Define subspaces
\begin{gather*}
    E_1 = \text{span} \{z_1 \phi_k + \ol{z_1 \phi_1}, z_2 \phi_{k + 1} + \ol{z_2 \phi_{k + 1}}: z_1, z_2 \in \C\} \\
    E_2 = E_1^\perp \text{ in } H^4(\T) \\
    \ol{E}_2 \quad \text{is the closure in }L^2(\T).
\end{gather*}
Split $L_\lambda = \mathcal{J}_\lambda \oplus \mathcal{L}_\lambda$ so that $\mathcal{J}_\lambda = \text{Diag}(\beta_k, \ol \beta_k, \beta_{k + 1}, \ol \beta_{k + 1})$. Then for $u \in E_1$ (for which we can write as $z_1 \phi_k + \ol{z_1 \phi_1} + z_2 \phi_{k + 1} + \ol{z_2 \phi_{k + 1}}$ for some $z_1, z_2 \in \C$. 
\begin{proposition}
The system near $\lambda = \lambda_0$ is completely characterized by the projected system 
\begin{gather}
    \frac{dz_1}{dt} = \beta_1(\lambda) z_1 + z_1(A(\lambda)|z_1|^2 + B(\lambda)|z_2|^2) + o((|z_1| + |z_2|)^3) \\
    \frac{dz_2}{dt} = \beta_2(\lambda) z_2 + z_2(C(\lambda)|z_1|^2 + D(\lambda)|z_2|^2) + o((|z_1| + |z_2|)^3)
\end{gather}
If $k > 1$, then the transition numbers are given by 
\begin{gather*}
    A = \frac{4b^2}{1 - \lambda_0} + \frac{2b^2}{(15 (k\rho)^4 - 6(k\rho)^2) + 6 \sigma (k\rho)^3 i} - 3 \\
    B = \frac{4b^2}{1 - \lambda_0} + \frac{4b^2}{(2k^2 - 2) \rho^2 - (k^4 - 1)\rho^4 + \sigma (3k^2 + 3k)\rho^3 i} - 6 \\
    C = \frac{4b^2}{1 - \lambda_0} + \frac{4b^2}{(2k^2 - 2) \rho^2 - (k^4 - 1)\rho^4 - \sigma (3k^2 + 3k)\rho^3 i} - 6 \\
    D = \frac{4b^2}{1 - \lambda_0} + \frac{2b^2}{(15 (k\rho + \rho)^4 - 6(k\rho + \rho)^2) + 6 \sigma (k\rho + \rho)^3 i} - 3
\end{gather*}
and if $k = 1$, the transition numbers are given by 
\begin{gather*}
    A = \frac{4b^2}{1 - \lambda_0} - 3 \\
    B = C = \frac{4b^2}{1 - \lambda_0} - 6 \\
    D = \frac{4b^2}{1 - \lambda_0} + \frac{2b^2}{(15 (2\rho)^4 - 6(2\rho)^2) + 6 \sigma (2\rho)^3 i} - 3
\end{gather*}

\end{proposition}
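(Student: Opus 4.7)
The plan is to extend the center-manifold computation of \cref{section:complex_simple} to the four-dimensional critical subspace $E_1$ via the Lyapunov-Perron formula (the analog of \cref{center_formula}) with $\mathcal{J}_\lambda = \text{Diag}(\beta_k, \overline{\beta}_k, \beta_{k+1}, \overline{\beta}_{k+1})$. For $u = z_1 \phi_k + \overline{z_1 \phi_k} + z_2 \phi_{k+1} + \overline{z_2 \phi_{k+1}} \in E_1$, expanding $u^2$ in the Fourier basis $\{\phi_n\}_{n \in \Z}$ produces ten quadratic monomials attached to the modes $\phi_{\pm 2k}$, $\phi_{\pm 2(k+1)}$, $\phi_{\pm(2k+1)}$, $\phi_{\pm 1}$, and $\phi_0$. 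For $k \ge 2$, none of these Fourier indices lies in $\{\pm k, \pm(k+1)\}$, so $P_2$ acts as the identity on $G_2(u)$; for $k = 1$ the modes $\phi_{\pm 2}$ and $\phi_{\pm 1}$ coincide with $\phi_{\pm(k+1)}$ and $\phi_{\pm k}$ respectively, lie in $E_1$, and are annihilated by $P_2$.

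For each surviving mode $\phi_j$ with quadratic coefficient $c_j$ and associated eigenvalue sum $\mu_j$ drawn from $\{2\beta_k,\, 2\beta_{k+1},\, \beta_k + \beta_{k+1},\, \beta_k + \overline{\beta}_{k+1},\, \beta_k + \overline{\beta}_k,\, \beta_{k+1} + \overline{\beta}_{k+1}\}$ and their complex conjugates, the Lyapunov-Perron integral $\int_{-\infty}^0 c_j\, e^{\tau(\mu_j - \beta_j)}\, d\tau = c_j/(\mu_j - \beta_j)$ gives the contribution of that mode to $\Phi(u, \lambda)$; convergence is automatic because $\re(\mu_j - \beta_j) > 0$ for every non-critical mode that appears near $\lambda_0$. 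Substituting $u + \Phi(u, \lambda)$ into $G(u) = b u^2 - u^3$ and projecting onto $\phi_k$ and $\phi_{k+1}$, the cubic-order contributions come only from $2b\, \PP_k(u \Phi)$ and $\PP_k(-u^3)$ (and analogously for $\phi_{k+1}$); all other pieces of $G(u + \Phi)$ are at least order four in $(z_1, z_2)$.

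To extract the coefficients, I would count the Fourier triples in $\{\pm k, \pm(k+1)\}^3$ that sum to $k$, finding exactly two contributing multisets $\{k, k, -k\}$ and $\{k, k+1, -(k+1)\}$, which yield $-3$ on $z_1 |z_1|^2$ and $-6$ on $z_1 |z_2|^2$ from $\PP_k(-u^3)$; the analogous count for $\phi_{k+1}$ gives $-3$ in $D$ and $-6$ in $C$. The interaction $2 b u \Phi$ then contributes a term $4 b^2/(\mu_j - \beta_j)$ (or $2 b^2/(\mu_j - \beta_j)$ for the diagonal channel fed by $\phi_{\pm 2 i}$) for each pairing $(j_u, j_\Phi)$ with $j_u + j_\Phi$ equal to $k$ or $k+1$. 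Specializing to $\lambda = \lambda_0$, where $\re \beta_k = \re \beta_{k+1} = 0$, every $|z_i|^2 \phi_0$ contribution acquires the denominator $2 \re \beta_i - \beta_0 = 1 - \lambda_0$, and the other denominators are read off from $\beta_n(\lambda_0) = (1 - k^2 \rho^2)^2 - (1 - n^2 \rho^2)^2 - i \sigma \rho^3 n^3$ by separating real and imaginary parts, producing the stated formulas for $A, B, C, D$ in the $k \ge 2$ case. For $k = 1$ the four annihilated modes remove the corresponding fractions from $\Phi$, simplifying $A$, $B$, $C$ while $D$ retains the surviving $\phi_{\pm 4}$ contribution. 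The main obstacle is purely combinatorial bookkeeping, namely tabulating exactly which pairings $(j_u, j_\Phi)$ survive $P_2$ and assigning each the correct divisor, especially in the $k = 1$ resonance-collision case $2k = k + 1$; there is no analytic difficulty once the table is complete.
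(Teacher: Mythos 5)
Your overall strategy is the same Lyapunov--Perron center-manifold computation the paper uses, and most of the bookkeeping you describe is right: the $-3$ and $-6$ counts from $\mathcal{P}_k(-u^3)$, the $2b^2$ versus $4b^2$ weights for the square versus cross channels, the $1-\lambda_0$ denominator for the $\phi_0$ channel, and the identification of which quadratic modes die under $P_2$ when $k=1$ are all correct. The gap is at the very last step, where you assert that tabulating the surviving pairings "produc[es] the stated formulas." Your own mode list includes $\phi_{\pm(2k+1)}$ (coming from the cross term $2z_1z_2\phi_k\phi_{k+1}$ in $u^2$), and since $2k+1\notin\{\pm k,\pm(k+1)\}$ for every $k\ge 1$, this mode survives $P_2$ in both the $k\ge 2$ and the $k=1$ cases. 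It therefore contributes a term $\frac{2b\,z_1z_2}{\beta_k+\beta_{k+1}-\beta_{2k+1}}\phi_{2k+1}$ to $\Phi$, and the pairings $\bar z_2\phi_{-(k+1)}\cdot\phi_{2k+1}=\phi_k$ and $\bar z_1\phi_{-k}\cdot\phi_{2k+1}=\phi_{k+1}$ in $2bu\Phi$ then add $\frac{4b^2}{\beta_k+\beta_{k+1}-\beta_{2k+1}}$ to both $B$ and $C$. This contribution does not vanish: at $\lambda=\lambda_0$ the denominator has real part $(1-(2k+1)^2\rho^2)^2-\lambda_0\neq 0$, so it is a genuine cubic coefficient of exactly the same size as the $\phi_{\pm 1}$ channel you do keep. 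The stated formulas for $B$ and $C$ contain no such term, so your procedure, carried out as you describe it, does not reproduce them.

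For comparison, the paper's proof never confronts this channel because its expansion of $G_2(e^{\tau\mathcal J_\lambda}u)$ lists only four of the six cross terms, omitting $z_1z_2\phi_k\phi_{k+1}$ and its conjugate, so no $\phi_{2k+1}$ component ever enters \cref{4cross_center_func}. You must either exhibit the full table and explain why the pairings $(j_u,j_\Phi)=(-(k+1),\,2k+1)$ and $(-k,\,2k+1)$ drop out (they do not), or accept that $B$ and $C$ each acquire the extra summand $\frac{4b^2}{\beta_k+\beta_{k+1}-\beta_{2k+1}}$. Note that since $B$ and $C$ gain the same term, the identity $\re B=\re C$ used when applying \cref{th:double_hopf} is unaffected, but the value of $\eta_2$ is. As written, the sentence "producing the stated formulas for $A,B,C,D$" is precisely the step that fails, and it is also where all the actual content of the proposition lives.
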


\begin{proof}
Again we first compute the center manifold function. Observe that for $u \in E_1$, 
\begin{multline*}
    G_2(e^{\tau \mathcal{J}_\lambda} u) = e^{2 \tau \beta_k} z_1^2 \phi_k^2 + e^{2 \tau \ol \beta_k} \ol z_1^2 \ol \phi_k^2 + e^{2 \tau \beta_{k + 1}} z_2^2 \phi_{k + 1}^2 + e^{2 \tau \ol \beta_{k + 1}} \ol z_2^2 \ol \phi_{k + 1}^2 \\ + 2 \left(e^{\tau (\beta_k + \ol \beta_k)} z_1 \ol z_1 \phi_k \ol \phi_k + e^{\tau \beta_{k + 1} + \ol \beta_{k + 1}} z_2 \ol z_2 \phi_{k + 1} \ol \phi_{k + 1} \right. \\ \left. + e^{\tau(\beta_k + \ol \beta_{k + 1})} z_1 \ol z_2 \phi_k  \ol \phi_{k + 1} + e^{\tau(\ol \beta_k + \beta_{k + 1})} \ol z_1 z_2 \ol \phi_k \phi_{k + 1} \right)
\end{multline*}
It is clear from above that $P_2 G_2(e^{\tau \mathcal{J}_\lambda} u) = G_2(e^{\tau \mathcal{J}_\lambda} u)$ for $k \neq 1$, and for $k = 1$, several cross terms will vanish. We consider the case $k \neq 1$ first. Projecting onto $E_2$ and integrating $\tau$ from $-\infty$ to $0$, we find 
\begin{multline}\label{4cross_center_func}
    \Phi(u, \lambda) = \frac{b}{2 \beta_k - \beta_{2k}} z_1^2 \phi_k^2 + \frac{b}{2 \beta_{k + 1} - \beta_{2k + 2}} z_2^2 \phi_{k + 1}^2 + \frac{b}{\beta_k + \ol \beta_k - \beta_0} |z_1|^2 \\ 
    + \frac{b}{\beta_{k + 1} + \ol \beta_{k + 1} - \beta_0} |z_2|^2 + \frac{2b}{\ol \beta_k + \beta_{k + 1} - \beta_1} \ol z_1 z_2 \phi_1 + o((|z_1| + |z_2|)^2) + c.c. 
\end{multline}
where $c.c.$ is the complex conjugate of everything preceding it. Label the coefficients in \cref{4cross_center_func} $\mu_i$ for $i$ from $1$ to $5$ so that 
\[\Phi(u, \lambda) = \mu_1 z_1^2 \phi_k^2 + \mu_2 z_2^2 \phi_{k + 1}^2 + \mu_3|z_1|^2 + \mu_4 |z_2|^2 + \mu_5 \ol z_1 z_2 \phi_1 + c.c.\]
Note that $\mu_3$ and $\mu_4$ are real, and all of them depend on $\lambda$ and the fixed parameters $b$ and $\sigma$. Note that at $\lambda_0 - (1 - (k \rho)^2)^2 = 0$ and $\lambda_0 - (1 - (k \rho + \rho)^2)^2 = 0$. So at $\lambda = \lambda_0$, the coefficients are given by
\begin{gather*}
    \mu_1 = \frac{b}{15(k \rho)^4 - 6 (k \rho)^2 + 6\sigma (k \rho)^3 i} \\ 
    \mu_4 = \frac{b}{15(k \rho + \rho)^4 - 6 (k \rho + \rho)^2 + 6\sigma (k \rho + \rho)^3 i} \\
    \mu_2 = \mu_3 = \frac{b}{1 - \lambda_0} \\
    \mu_5 = \frac{2b}{(2k^2 - 2) \rho^2 - (k^4 - 1)\rho^4 - \sigma (3k^2 + 3k)\rho^3 i}
\end{gather*}
The complex system yielded from projecting the dynamics on the center manifold onto $\phi_k$ and $\phi_{k + 1}$ completely captures the dynamics of the real system. Let $\PP_k$ and $\PP_{k + 1}$ denote the projections onto $\phi_k$ and $\phi_{k + 1}$ respectively. Then 
\begin{multline*}
    \PP_k G (z_1 \phi_k + z_2 \phi_{k + 1} + \ol{z_1 \phi_k} + \ol{z_2 \phi_{k + 1}} + \Phi(u, \lambda)) = (4b\mu_3 + 2 b \mu_1 - 3) z_1|z_1|^2 \\ + (4 b \mu_4 + 2 b \ol \mu_5 - 6) z_1 |z_2|^2 + o((|z_1| + |z_2|)^3)
\end{multline*}
and 
\begin{multline*}
    \PP_{k + 1} G (z_1 \phi_k + z_2 \phi_{k + 1} + \ol{z_1 \phi_k} + \ol{z_2 \phi_{k + 1}} + \Phi(u, \lambda)) = (4b\mu_3 + 2b\mu_5 - 6) z_2 |z_1|^2 \\ + (4b \mu_4 + 2b \mu_2 - 3) z_2 |z_2|^2 + o((|z_1| + |z_2|)^3)
\end{multline*}

Now for $k = 1$, 
\begin{equation*}
    P_2 G_2(e^{\tau \mathcal J_\lambda} u) = e^{2 \tau \beta_2} z_2 \phi_2^2 + e^{\tau (\beta_1 + \ol \beta_1)} |z_1| + e^{\tau (\beta_2 + \ol \beta_2)} |z_2| + c.c.
\end{equation*}
So then 
\begin{equation*}
    \Phi(u, \lambda) = \frac{b}{2 \beta_{2} - \beta_{4}} z_2^2 \phi_2^2 + \frac{b}{\beta_1 + \ol \beta_1 - \beta_0} |z_1|^2 + \frac{b}{\beta_2 + \ol \beta_2 - \beta_0} |z_2|^2 + o((|z_1| + |z_2|)^2) + c.c. 
\end{equation*}
Label the coefficients $\nu_i$ so that $\Phi(u, \lambda) = \nu_1 z_2^2 \phi_2^2 + \nu_2 |z_1|^2 + \nu_3 |z_2|^2 + o((|z_1| + |z_2|)^2) + c.c.$. Then 
\begin{multline*}
    \PP_1 G(z_1 \phi_1 + z_2 \phi_2 + \ol{z_1 \phi_1} + \ol{z_2 \phi_2} + \Phi(u, \lambda)) = (4b \nu_2 - 3)z_1 |z_1|^2 \\ + (4b \nu_3 - 6) z_1 |z_2|^2
    + o((|z_1| + |z_2|)^3)
\end{multline*}
\begin{multline*}
    \PP_2 G(z_1 \phi_1 + z_2 \phi_2 + \ol{z_1 \phi_1} + \ol{z_2 \phi_2} + \Phi(u, \lambda)) = (4b \nu_2 - 6) z_2 |z_1|^2 \\ + (4b \nu_3 + 2b \nu_1 - 3) z_2 |z_2|^2 + o((|z_1| + |z_2|)^3)
\end{multline*}
Substituting in the values for $\mu_i$ and $\nu_j$ at $\lambda = \lambda_0$, we have the desired result.  
\end{proof}

\begin{figure}
    \centering
    \includegraphics[scale = 0.6]{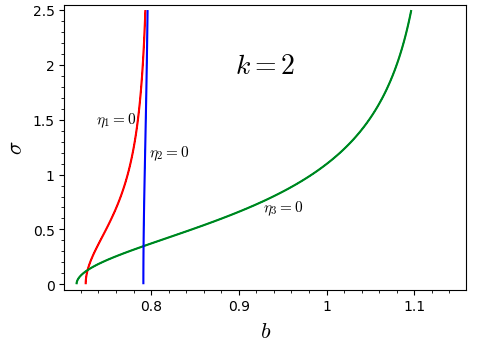}
    \caption{Phase diagram for $k = 2$.} 
    \label{fig:k=2}
\end{figure}
\begin{figure}
    \centering
    \includegraphics[scale = 0.6]{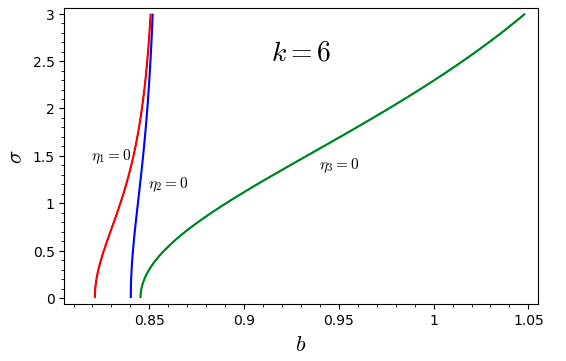}
    \caption{Phase diagram for $k = 6$.} 
    \label{fig:k=6}
\end{figure}

Note that $\re B = \re C$. The parameters given in \cref{eta1} - (\ref{eta2}) are given by
\[\eta_1 = \re A, \qquad \eta_2 = \re B = \re C, \qquad \eta_3 = \re D.\]
Note that for a fixed value of $\sigma$, $\eta_1(b) > \eta_3(b)$. Then applying \cref{th:double_hopf} immediately yields part (iv) of \cref{MAIN}.

\section{Real Center Manifold Function and Numerical Results}
So far, to classify the types of transition, it has benefitted us to work in a complex space since rotational behavior is much more simply described in complex spaces, thus giving us simple forms for the transition number (compared to the standard bifurcation number formula \cite{ma2005bifurcation} for Hopf bifucation). However, the physics of the system still resides in a real space. A particularly simple case with nontrivial transitions to analyze is the case $\ell = 2 \pi$. In this section, we give the explicit formula for the center manifold function in a real function space for this case, and numerically compute the approximate trajectories on the center manifold. 

Define $E_1$ and $E_2$ as in \cref{section:complex_simple}.
\begin{proposition}
The center manifold function $\Phi : E_1 \to \overline{E}_2$ of \cref{SH_system} with $\ell = 2 \pi$  is locally approximated by 
\begin{equation}\label{center_manifold_function}
    \Phi(u, \lambda)(x) = A(u) \cos(2x) + B(u) \sin(2x) + C(u) + o(\lVert u \rVert^2)
\end{equation}
where $u = u_1 \cos(x) + u_2 \sin(x)$ and
\begin{align*}
    & A(u) = A(u_1, u_2) \coloneqq \frac{1}{2} \frac{b}{(\lambda + 9)^2 + (6 \sigma)^2} \left((9 + \lambda) (u_1^2 - u_2^2) - 12 u_1 u_2 \sigma \right) \\
    & B(u) = B(u_1, u_2) \coloneqq \frac{1}{2} \frac{b}{(\lambda + 9)^2 + (6 \sigma)^2} \left( (9 + \lambda)2 u_1 u_2 + 6 \sigma (u_1^2 - u_2^2) \right) \\
    & C(u) = C(u_1, u_2) \coloneqq \frac{b}{2 (\lambda + 1)} (u_1^2 + u_2^2)
\end{align*}
for $\lambda$ near $0$. 
\end{proposition}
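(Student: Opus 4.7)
The strategy is to specialize the complex center manifold formula derived in \cref{section:complex_simple} to the case $\ell = 2\pi$ (for which $\rho = 1$ and the critical mode index is $k = 1$), and then rewrite it in real variables. Since $\ell = 2\pi$ lies in $\mathcal{I}_2$ with $\lambda_0 = 0$, only the conjugate pair $\beta_{\pm 1}$ is critical, and the formula already established is
\begin{equation*}
    \Phi(u,\lambda) = \frac{b}{2\beta_1 - \beta_2} z^2 \phi_1^2 + \frac{2b}{\beta_1 + \overline{\beta}_1 - \beta_0} |z|^2 + o(\|u\|^2) + c.c.,
\end{equation*}
where $u = z\phi_1 + \overline{z\phi_1}$ and $\phi_1(x) = e^{ix}$.

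First I would compute the three relevant eigenvalues at $\ell = 2\pi$: $\beta_1(\lambda) = \lambda - i\sigma$, $\beta_2(\lambda) = \lambda - 9 - 8i\sigma$, and $\beta_0(\lambda) = \lambda - 1$. This yields the denominators $2\beta_1 - \beta_2 = (\lambda + 9) + 6i\sigma$ and $\beta_1 + \overline{\beta}_1 - \beta_0 = \lambda + 1$. Rationalizing the first denominator immediately produces the factor $1/\bigl((\lambda+9)^2 + 36\sigma^2\bigr)$ appearing in $A(u)$ and $B(u)$, while the second denominator is already real and gives exactly $C(u)$ once $|z|^2 = \tfrac{1}{4}(u_1^2 + u_2^2)$ is substituted.

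Next, I would introduce real coordinates by setting $z = \tfrac{1}{2}(u_1 - i u_2)$, so that $z\phi_1 + \overline{z\phi_1} = u_1 \cos x + u_2 \sin x$, matching the statement. Then $z^2 = \tfrac{1}{4}(u_1^2 - u_2^2) - \tfrac{i}{2} u_1 u_2$ and $\phi_1^2 = \cos 2x + i\sin 2x$. Substituting these into the first term and taking twice the real part (to absorb the conjugate $c.c.$) separates out the $\cos 2x$ and $\sin 2x$ components; a direct comparison with the stated formulas for $A(u_1,u_2)$ and $B(u_1,u_2)$ then closes the argument.

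The only substantive obstacle is careful bookkeeping of signs when multiplying the complex factors $(\lambda + 9) - 6i\sigma$, $\tfrac{1}{4}(u_1^2 - u_2^2) - \tfrac{i}{2} u_1 u_2$, and $\cos 2x + i\sin 2x$: in particular, making sure the $-6\sigma$ from the rationalized denominator combines correctly with the $-\tfrac{i}{2} u_1 u_2$ term in $z^2$ to recover the $-12 u_1 u_2 \sigma$ in $A$ and the $+6\sigma(u_1^2 - u_2^2)$ in $B$. No new dynamical input is required beyond \cref{section:complex_simple}; this is purely an algebraic verification.
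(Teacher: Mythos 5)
Your proposal is correct and follows essentially the same route as the paper: specialize the complex center manifold formula to $\ell = 2\pi$ (so $\beta_1 = \lambda - i\sigma$, $\beta_2 = \lambda - 9 - 8i\sigma$, $\beta_0 = \lambda - 1$, giving denominators $(\lambda+9)+6i\sigma$ and $\lambda+1$), substitute $z = \tfrac{1}{2}(u_1 - iu_2)$ and $\phi_1^2 = \cos 2x + i\sin 2x$, and take twice the real part of the $z^2\phi_1^2$ term. The sign bookkeeping you flag works out exactly as in the paper, recovering $-12\sigma u_1 u_2$ in $A$ and $+6\sigma(u_1^2-u_2^2)$ in $B$.
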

Note that $A$, $B$, and $C$ are all $O(\lVert u \rVert^2)$.

\begin{proof}
Assume $\ell = 2\pi$. Then we have 
\[\beta_n(\lambda) = \lambda - (1 - n^2)^2 - i\sigma n^3.\]
From \cref{complex_center_func_2}
\begin{align}
    \Phi(u, \lambda) &= \frac{b}{2\beta_1 - \beta_2} z^2 \phi_1^2 + \frac{b}{\overline{2\beta_1 - \beta_2}} \overline{z^2 \phi_1^2} + \frac{2b}{\beta_1 + \overline{\beta}_1 - \beta_0} z\phi_1 \overline{z \phi_1} + o(\lVert u \rVert^2) \nonumber \\
    &= 2 \text{Re} \left(\frac{b}{\lambda + 9 + 6 \sigma i} z^2\phi_1^2 \right) + \frac{2b}{\lambda + 1} |z|^2 + o(\lVert u \rVert^2) \label{complex_center}
\end{align}
We wish to express the center manifold function in the o.n.b. for $E_1$ given by 
\[e_1 = \cos x \qquad e_2 = \sin x,\]
so $u = u_1 e_1 + u_2 e_2$ for some $u_1, u_2 \in \R$, and it is clear that 
\[z = \frac{1}{2}(u_1 - iu_2).\]
We want to express the center manifold function as a function of $u_1$ and $u_2$. First note that the second term of \cref{complex_center} rearranges pretty easily to 
\begin{equation}\label{real_sys_first_term}
    \frac{b}{\lambda + 1} |z|^2 = \frac{b}{4 (\lambda + 1)} (u_1^2 + u_2^2).
\end{equation}
For the first term of \cref{complex_center}, we have
\begin{align}
    \text{Re} \left[\frac{bz^2}{\lambda + 9 + 6\sigma i} \phi_1^2 \right] &= \text{Re} \left[ \frac{bz^2}{\lambda + 9 + 6\sigma i} (e_1 + i e_2)^2 \right] \nonumber  \\
    &= \frac{1}{4} \frac{b}{(\lambda + 9)^2 + (6 \sigma)^2} \text{Re} \left[(\lambda + 9 - 6 \sigma i) (u_1 - i u_2)^2 (e_1 + i e_2)^2 \right] \nonumber \\
    &= \frac{1}{4} \frac{b}{(\lambda + 9)^2 + (6 \sigma)^2} \left[ \left((9 + \lambda) (u_1^2 - u_2^2) - 12 u_1 u_2 \sigma \right) \cos(2x)\right. \nonumber \\
    &\quad \left. + \left( (9 + \lambda)2 u_1 u_2 + 6 \sigma (u_1^2 - u_2^2) \right) \sin(2x)\right] \label{real_sys_second_term}
\end{align}
The center manifold function is then simply twice the sum of the real parts of either term, so putting \cref{real_sys_first_term} and \cref{real_sys_second_term} together, 
\begin{align}
    \Phi(u_1, u_2, \lambda)(x) &= \frac{b}{2 (\lambda + 1)} (u_1^2 + u_2^2) \nonumber \\
    &\quad + \frac{1}{2} \frac{b}{(\lambda + 9)^2 + (6 \sigma)^2} \left[ \left((9 + \lambda) (u_1^2 - u_2^2) - 12 u_1 u_2 \sigma \right) \cos(2x)\right. \nonumber \\
    &\quad \left. + \left( (9 + \lambda)2 u_1 u_2 + 6 \sigma (u_1^2 - u_2^2) \right) \sin(2x)\right] + o(\lVert u \rVert^2), \nonumber
\end{align}
which completes the proof. 
\end{proof}

From the center manifold function, we now compute the reduced system. Let $\PP_1$ and $\PP_2$ denote the projection onto $e_1$ and $e_2$ respectively. 

\begin{proposition}
The dynamics on the center manifold can be locally approximated by is then given by
\begin{align}
\cfrac{du_1}{dt} =& \lambda u_1 - \sigma u_2 + 2bu_1 C(u_1, u_2) + b u_1 A(u_1, u_2) \nonumber \\
& + b u_2 B(u_1, u_2) - \frac{3}{4} u_1^3 - \frac{3}{4} u_1 u_2^2\\
\cfrac{du_2}{dt} =& \sigma u_1 + \lambda u_2 + b u_1 B(u_1, u_2) - b u_2 A(u_1, u_2) \nonumber \\
&+ 2 b u_2 C(u_1, u_2) - \frac{3}{4} u_2^3 - \frac{3}{4} u_1^2 u_2
\end{align}
where $u_1$ and $u_2$ are the amplitudes of projection onto $e_1$ and $e_2$ respectively. 
\end{proposition}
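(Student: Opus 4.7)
The plan is to substitute the center-manifold parametrization $u(x,t)=u_1(t)e_1(x)+u_2(t)e_2(x)+\Phi(u,\lambda)(x)$ into \cref{SH_system} and project onto $e_1=\cos x$ and $e_2=\sin x$. Writing $v\coloneqq u_1 e_1+u_2 e_2$, this yields
\[
\dot{u}_j \;=\; \PP_j L_\lambda (v+\Phi) \;+\; \PP_j G(v+\Phi,\lambda), \qquad j=1,2,
\]
and the task reduces to computing each projection modulo $o(\lVert u\rVert^3)$.

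For the linear part, I would first note that because each Fourier mode pair $\{\cos nx,\sin nx\}$ is $L_\lambda$-invariant, and $\Phi$ lies entirely in the span of $\{1,\cos 2x,\sin 2x\}$ plus higher-order corrections, we get $\PP_j L_\lambda\Phi=0$. A direct computation using $\partial_x^2 e_1=-e_1$, $\partial_x^2 e_2=-e_2$, and $\partial_x^3 e_1=\sin x$, $\partial_x^3 e_2=-\cos x$ gives $L_\lambda e_1=\lambda e_1+\sigma e_2$ and $L_\lambda e_2=-\sigma e_1+\lambda e_2$, accounting for the $\lambda u_j\mp\sigma u_{3-j}$ terms.

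For the nonlinear part, expand
\[
G(v+\Phi,\lambda)\;=\;b v^{2}+2b v\,\Phi-v^{3}+O(\lVert u\rVert^{4}),
\]
since $\Phi=O(\lVert u\rVert^{2})$ makes $b\Phi^{2}$ quartic and $-3v^{2}\Phi$ also quartic. The pure quadratic $bv^{2}$ lives in the span of $\{1,\cos 2x,\sin 2x\}$ and therefore contributes nothing to $\PP_1$ or $\PP_2$. For $-v^{3}$, the standard identities $\cos^{3}x=\tfrac{3\cos x+\cos 3x}{4}$, $\sin^{3}x=\tfrac{3\sin x-\sin 3x}{4}$, $\cos^{2}x\sin x=\tfrac{\sin x+\sin 3x}{4}$, $\cos x\sin^{2}x=\tfrac{\cos x-\cos 3x}{4}$ produce exactly the cubic self-interaction terms $-\tfrac{3}{4}u_1^{3}-\tfrac{3}{4}u_1u_2^{2}$ and $-\tfrac{3}{4}u_1^{2}u_2-\tfrac{3}{4}u_2^{3}$.

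The main bookkeeping step is the cross term $2bv\Phi$, with $\Phi=A\cos 2x+B\sin 2x+C$. Applying the product-to-sum formulas $\cos x\cos 2x=\tfrac12(\cos x+\cos 3x)$, $\cos x\sin 2x=\tfrac12(\sin x+\sin 3x)$, $\sin x\cos 2x=\tfrac12(\sin 3x-\sin x)$, $\sin x\sin 2x=\tfrac12(\cos x-\cos 3x)$, one extracts the $\cos x$ and $\sin x$ components:
\[
\PP_1\!\bigl[v\Phi\bigr]=\tfrac12(u_1 A+u_2 B)+u_1 C,\qquad \PP_2\!\bigl[v\Phi\bigr]=\tfrac12(u_1 B-u_2 A)+u_2 C.
\]
Multiplying by $2b$ and adding the contributions above gives precisely the asserted $b u_1 A+b u_2 B+2b u_1 C$ and $b u_1 B-b u_2 A+2b u_2 C$ terms. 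The only potential obstacle is keeping signs straight across the four product-to-sum expansions; otherwise the verification is routine once $L_\lambda$ is seen to leave $\Phi$ in $\overline{E}_2$.
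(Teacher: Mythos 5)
Your proposal is correct and follows essentially the same route as the paper: substitute $u = u_1e_1 + u_2e_2 + \Phi$, observe that $L_\lambda$ preserves the Fourier modes so only the $E_1$-component of the linear part survives projection, discard the quartic terms $b\Phi^2$ and $-3v^2\Phi$, and extract the $\cos x$ and $\sin x$ coefficients of $2bv\Phi$ and $-v^3$; your product-to-sum bookkeeping reproduces exactly the coefficients the paper obtains by computing $\frac{1}{\pi}\int_0^{2\pi}(\cdot)\,e_j\,dx$. The only cosmetic difference is that the paper derives the linear block from $\beta_1 z\phi_1 + \text{c.c.}$ in the complex eigenbasis rather than from $\partial_x^3\cos x = \sin x$ directly, which yields the same $(\lambda u_1 - \sigma u_2,\ \sigma u_1 + \lambda u_2)$ terms.
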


\begin{proof}
Projecting the dynamics on the center manifold onto the center subspace, we find the reduced system
\begin{equation}\label{reduced_system_formula}
    \begin{cases} \cfrac{du_1}{dt} = \PP_1 \mathcal{L}_\lambda (u_1 e_1 + u_2 e_2) + \PP_1 G(u_1 e_1 + u_2 e_2 + \Phi(u_1 e_1 + u_2 e_2, \lambda)) \\ \cfrac{du_1}{dt} = \PP_2 \mathcal{L}_\lambda (u_1 e_1 + u_2 e_2) + \PP_2 G(u_1 e_1 + u_2 e_2 + \Phi(u_1 e_1 + u_2 e_2, \lambda)) \end{cases}.
\end{equation}
Observe that 
\begin{align}
    \mathcal{L}_{\lambda}(u_1 e_1 + u_2 e_2) &= \mathcal{L}_\lambda (z \phi_1 + \overline{z \phi_1}) \nonumber \\
    &= (\lambda - i\sigma) z \phi_1 + \overline{(\lambda - i \sigma) z \phi_1} \nonumber \\
    &= (\lambda u_1 - \sigma u_2) e_1 + (\sigma u_1 + \lambda u_2) e_2 \label{eigen_proj}
\end{align}
For the projection of $G$ onto $e_1(x)$, we find that
\begin{align}
    \PP_1 G(u_1 e_1 + u_2 e_2 + \Phi) =& \PP_1(2b u_1 u_2 e_1 e_2 + 2b u_1 e_1 \Phi + 2b u_2e_2 \Phi \nonumber \\
    & + b u_1^2 e_1^2 + b u_2^2 e_2^2 - u_1^3 e_1^3 - u_2^3 e_2^3 \nonumber \\
    & - 3u_1^2 u_2 e_1^2 e_2 - 3 u_1 u_2^2 e_1 e_2^2) + o(\lVert u \rVert^3) \nonumber \\
    =& \frac{1}{\pi} \int_0^{2\pi} (2b u_1 e_1 \Phi + 2b u_2 e_2 \Phi - u_1^3 e_1^3 - 3 u_1 u_2^2 e_1 e_2^2) e_1 \, dx \nonumber  \\
    & + o(\lVert u \rVert^3) \nonumber \\
    =& 2bu_1 C(u_1, u_2) + b u_1 A(u_1, u_2) + b u_2 B(u_1, u_2) \nonumber \\
    & - \frac{3}{4} u_1^3 - \frac{3}{4} u_1 u_2^2 + o(\lVert u \rVert^3). \label{e_1_proj}
\end{align}
For the projection onto $e_2(x)$, we similarly find
\begin{align}
    \PP_2 G(u_1 e_1 + u_2 e_2 + \Phi) =& \PP_2(2b u_1 u_2 e_1 e_2 + 2b u_1 e_1 \Phi + 2b u_2e_2 \Phi \nonumber \\
    & + b u_1^2 e_1^2 + b u_2^2 e_2^2 - u_1^3 e_1^3 - u_2^3 e_2^3 \nonumber \\
    & - 3u_1^2 u_2 e_1^2 e_2 - 3 u_1 u_2^2 e_1 e_2^2) + o(\lVert u \rVert^3) \nonumber \\
    =& \frac{1}{\pi} \int_0^{2\pi} (2 b u_1 e_1 \Phi + 2 b u_2 e_2 \Phi - u_2^3 e_2^3 - 3 u_1^2 u_2 e_1^2 e_2) e_2 \, dx \nonumber \\
    & + o(\lVert u \rVert^3) \nonumber \\
    =& b u_1 B(u_1, u_2) - b u_2 A(u_1, u_2) + 2 b u_2 C(u_1, u_2) \nonumber \\
    & - \frac{3}{4} u_2^3 - \frac{3}{4} u_1^2 u_2 + o(\lVert u \rVert^3). \label{e_2_proj}
\end{align}
So then putting \cref{eigen_proj}-(\ref{e_2_proj}) into \cref{reduced_system_formula}, we find an approximate reduced system that locally describes the dynamics:
\begin{align}
\cfrac{du_1}{dt} =& \lambda u_1 - \sigma u_2 + 2bu_1 C(u_1, u_2) + b u_1 A(u_1, u_2) \nonumber \\
& + b u_2 B(u_1, u_2) - \frac{3}{4} u_1^3 - \frac{3}{4} u_1 u_2^2 \label{reduced_system_u1}\\
\cfrac{du_2}{dt} =& \sigma u_1 + \lambda u_2 + b u_1 B(u_1, u_2) - b u_2 A(u_1, u_2) \nonumber \\
&+ 2 b u_2 C(u_1, u_2) - \frac{3}{4} u_2^3 - \frac{3}{4} u_1^2 u_2 \label{reduced_system_u2}
\end{align}
\end{proof}

For completeness and to verify that this is indeed the same as what we computed in \cref{section:complex_simple}, we directly compute the Lyapunov number associated with the Hopf bifucation that occurs at $\lambda_0$. Note that \cref{e_1_proj} and \cref{e_2_proj} can be rearranged to a polynomial $u_1$ and $u_2$ so that
\[\PP_i G(u_1 e_1 + u_2 e_2 + \Phi(u_1 e_1 + u_2 e_2, \lambda_0)) = \sum_{2 \le p + q \le 3} a^{i}_{pq} u_1^p u_2^q + o(\lVert u \rVert^3)\]
for $i = 1, 2$. Then the Lyapunov number is given by 
\[\begin{array}{c}
    \eta = \cfrac{3\pi}{4} (a^1_{30} + a^2_{03}) + \cfrac{\pi}{4} (a^1_{12} + a^2_{21}) + \cfrac{\pi}{2 \text{Im}(\overline{\beta}_1)} (a^1_{02} a^2_{02} - a^1_{20} a^2_{20}) \\
    + \cfrac{\pi}{4 \text{Im}(\overline{\beta}_1)} (a^1_{11} a^1_{20} + a^1_{11} a^1_{02} - a^2_{11} a^2_{20} - a^2_{11} a^2_{02} ).
\end{array}\]
It is easy to see by inspection that $a^i_{pq} = 0$ for all $p + q = 2$, and 
\begin{gather*}
    a^1_{30} = a^2_{03} = -\frac{3}{4} + b^2 + \frac{1}{2} \frac{b^2}{9 + 4 \sigma^2} \\
    a^1_{12} = a^2_{21} = -\frac{3}{4} + b^2 + \frac{1}{2} \frac{b^2}{9 + 4 \sigma^2}
\end{gather*}
Then following the bifurcation number formula, we find
\[\eta = \pi \left(2b^2 + \frac{b^2}{9 + 4 \sigma^2} - \frac{3}{2} \right)\]
\begin{figure}
    \centering
    \includegraphics[scale = 0.6]{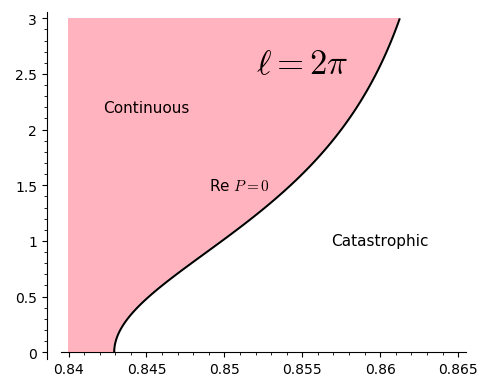}
    \caption{The phase diagram at $\ell = 2\pi$.}
    \label{fig:2pi_phase}
\end{figure}
Therefore we have a continuous transition if $\eta < 0$, so we have a continuous transition if 
\[4b^2 + \frac{2b^2}{9 + 4 \sigma^2} - 3 < 0\]
and a catastrophic transition for the opposite inequality (see \cref{fig:2pi_phase}). This implies that if $b^2 < \frac{27}{38}$, the transition will be continuous, if $b^2 > \frac{3}{4}$, the transition will necessarily be a catastrophic transition. Note that this indeed matches the results of \cref{section:complex_simple}.

We can see the change in the transition type across a critical value of $\sigma$. Take $b = 0.86$, then the critical value is at $\sigma \approx 2.577$. So at $\sigma = 2.6$, we expect to see a continuous transition, with attracting periodic orbits for $\lambda > 0$. And indeed, at $\lambda = 0.01$, we see in the left side graph of \cref{fig:periodic_orbit} that the forward in time solutions (up to $t = 100$) with initial values at $(0.9, 0)$ and $(1.8, 0)$ converge onto a limit cycle, approximated by $r \approx 1.158$. 

At $\sigma = 2.5$, we expect a catastrophic transition, with repelling periodic orbits for $\lambda < 0$. At $\lambda = -0.1$, we see in the right side graph of \cref{fig:periodic_orbit} that the backward in time solutions with initial values at $(0.9, 0)$ and $(0.8, 0)$ also converge onto a limit cycle, approximated by $r \approx 1.122$. 

\begin{figure}
    \centering
    \includegraphics[scale = 0.65]{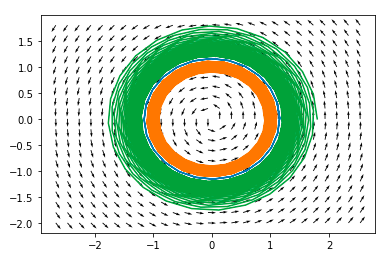}
    \includegraphics[scale = 0.65]{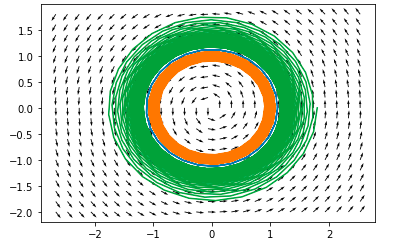}
    \caption{Forward in time trajectories (left) tending towards the stable periodic orbit (blue), and backward in time trajectories (right) tending towards the unstable periodic orbit (blue).}
    \label{fig:periodic_orbit}
\end{figure}

For continuous transitions, we can also numerically approximate the radius of the limit cycles for small values of lambda by simply performing a binary search. As $\lambda \to 0$, the radius should decrease as the square root of lambda with coefficient determined by $\re P$ as in \cref{th:single_hopf}. And indeed, we can see this behavior clearly at $\sigma = 6$ and $b = 0.86$ in \cref{fig:bif_rad}.
\begin{figure}
    \centering
    \includegraphics{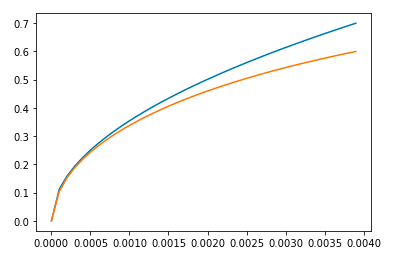}
    \caption{The blue line is the numerical approximations of the radius of the limit cycles as a function of $\lambda$, and the orange line is the analytical limiting behavior as $\lambda \to 0$.}
    \label{fig:bif_rad}
\end{figure}

The special case of $\ell = 2 \pi$ came with a lot of computational conveniences. However, it is easy to see that the dynamics for other $\ell \in \mathcal I_2$ follow similar dynamics, and the line $\re P = 0$ in $\sigma$ - $b$ phase space takes the same shape. Lastly, we give the real center manifold formulas for $\ell \in \mathcal I_2$ and $\mathcal I_4$ in real coordinates. Recall from \cref{complex_center_func_2} that the center manifold function for $\ell \in \mathcal I_2$ is given by
\begin{align*}
    \Phi(u, \lambda) =& \frac{b}{2 \beta_k - \beta_{2k}} z^2 \phi_k^2 + \frac{b}{\ol{2 \beta_k - \beta_{2k}}} \ol{z^2 \phi_k}^2 + \frac{2b}{\beta_k + \ol \beta_k - \beta_0} z \phi_k \ol{z\phi_k} + o ( \| u \|^2) \\
    =& 2 \re \left(\frac{b}{[\lambda - (1 - \rho^2)^2] - (6 \rho^2 - 15\rho^4) + 6 \sigma \rho^3 i} z^2 \phi_k^2 \right) \\
    &+ \frac{2b}{2[\lambda - (1 - \rho^2)^2] + 1 - \lambda} |z|^2 + o(\|u\|^2).
\end{align*}

Then employing the same change of basis with
\[e_1 = \cos(kx), \qquad e_2 = \sin(kx)\]
so that $u = u_1 e_1 + u_2 + e_2$ and $z = \frac{1}{2} (u_1 - i u_2)$, we find the following formula:
\begin{proposition}\label{real_2}
The center manifold function for $\ell \in \mathcal{I}_2$ for \cref{SH_system} is given by 
\[\Phi(u_1, u_2, \lambda)(x) = A(u_1, u_2, \lambda) \cos(2kx) + B(u_1, u_2, \lambda) \sin(2kx) + C(u_1, u_2, \lambda)\]
where
\begin{align*}
    A(u_1, u_2, \lambda) =& \frac{1}{2} \frac{b}{\left([\lambda - (1 - \rho^2)^2] - (15\rho^4 - 6 \rho^2)\right)^2 + \left(6 \sigma \rho^3 \right)^2} \\
    & \left[ \left([\lambda - (1 - \rho^2)^2] - (15\rho^4 - 6 \rho^2)\right) (u_1^2 - u_2^2) - 12u_1 u_2 \sigma \rho^3) \right]
\end{align*}
\begin{align*}
    B(u_1, u_2, \lambda) =& \frac{1}{2} \frac{b}{\left([\lambda - (1 - \rho^2)^2] - (15\rho^4 - 6 \rho^2)\right)^2 + \left(6 \sigma \rho^3 \right)^2} \\
    & \left[\left([\lambda - (1 - \rho^2)^2] - (15\rho^4 - 6 \rho^2)\right)2 u_1 u_2 + 6 \sigma \rho^3 (u_1^2 - u_2^2) \right]
\end{align*}
\[C(u_1, u_2, \lambda) = \frac{1}{2} \frac{b}{2[\lambda - (1 - \rho^2)^2] + 1 - \lambda} (u_1^2 + u_2^2)\]
\end{proposition}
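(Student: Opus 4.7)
The plan is to start from the complex-coordinate formula for the center manifold function already established in \cref{section:complex_simple} (equation \cref{complex_center_func_2}), and change basis to the real orthonormal pair $e_1(x) = \cos(k \rho x)$, $e_2(x) = \sin(k \rho x)$ of the critical eigenspace $E_1$. Since $\phi_k = e_1 + i e_2$, writing $u = u_1 e_1 + u_2 e_2 = z \phi_k + \overline{z \phi_k}$ and matching coefficients gives the bridge $z = \tfrac{1}{2}(u_1 - i u_2)$; this is exactly the substitution used in the $\ell = 2\pi$ calculation, and the general $\ell \in \mathcal{I}_2$ case runs along the same lines.

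The computation splits along the two terms of \cref{complex_center_func_2}. For the first term, first expand $z^2 \phi_k^2$ by combining the identities
\[
(u_1 - i u_2)^2 = (u_1^2 - u_2^2) - 2 i u_1 u_2, \qquad (e_1 + i e_2)^2 = \cos(2 k \rho x) + i \sin(2 k \rho x),
\]
yielding an explicit decomposition into real and imaginary parts whose components are trigonometric functions of $2 k \rho x$ with quadratic coefficients in $(u_1, u_2)$. Next, rationalize the complex prefactor as $\frac{b}{\alpha + i \beta} = \frac{b(\alpha - i \beta)}{\alpha^2 + \beta^2}$, with $\alpha = [\lambda - (1 - (k\rho)^2)^2] - (6(k\rho)^2 - 15 (k\rho)^4)$ and $\beta = 6 \sigma (k \rho)^3$ read off directly from $2\beta_k - \beta_{2k}$. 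Taking twice the real part distributes the real scalar $b/(\alpha^2 + \beta^2)$ across the trigonometric decomposition; collecting coefficients of $\cos(2 k \rho x)$ and $\sin(2 k \rho x)$ produces the claimed $A$ and $B$. The second term of \cref{complex_center_func_2} is already manifestly real: $|z|^2 = \tfrac{1}{4}(u_1^2 + u_2^2)$ multiplied by $\tfrac{2b}{2[\lambda - (1 - (k\rho)^2)^2] + 1 - \lambda}$ gives $C$.

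No step presents any genuine conceptual obstacle, since the heavy lifting --- constructing the complex center manifold function via the Lyapunov--Perron formula and controlling the $o(\lVert u \rVert^2)$ remainder --- was already carried out before \cref{complex_center_func_2}. The only care needed is bookkeeping of signs when rationalizing $1/(\alpha + i \beta)$ and when expanding $(u_1 - i u_2)^2 (\cos(2 k \rho x) + i \sin(2 k \rho x))$, since the cross terms between the $(u_1^2 - u_2^2)$ piece and the $-2 u_1 u_2$ piece are precisely what generates the $\sigma$-dependent $u_1 u_2$ contribution in $A$ and the $\sigma$-dependent $(u_1^2 - u_2^2)$ contribution in $B$. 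As a sanity check, specializing to $k \rho = 1$ must recover the $\ell = 2\pi$ formulas already verified earlier in the paper, which fixes any sign conventions in the final expressions for $A$, $B$, $C$.
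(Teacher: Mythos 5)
Your proposal is correct and follows exactly the paper's route: start from \cref{complex_center_func_2}, substitute $z = \tfrac{1}{2}(u_1 - i u_2)$ and $\phi_k = e_1 + i e_2$ with $e_1 = \cos(k\rho x)$, $e_2 = \sin(k\rho x)$, rationalize the complex prefactor, and collect the $\cos(2k\rho x)$, $\sin(2k\rho x)$, and constant modes, precisely as in the worked $\ell = 2\pi$ case. Your sanity check at $k\rho = 1$ is well chosen: carrying it out confirms that your $\alpha = [\lambda - (1-(k\rho)^2)^2] - (6(k\rho)^2 - 15(k\rho)^4)$, read off from $2\beta_k - \beta_{2k}$, is the consistent choice (it gives $\lambda + 9$ at $\ell = 2\pi$), whereas the proposition as printed writes $\rho$ where $k\rho$ is meant and reverses the sign of the $15(k\rho)^4 - 6(k\rho)^2$ term relative to \cref{complex_center_func_2}.
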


Now for $\ell \in \mathcal I_4$. Then the center manifold function in the complex eigenbasis was given by \cref{4cross_center_func}. We want to change basis to 
\begin{equation*}
    e_1 = \cos(kx) \qquad e_2 = \sin(kx) \qquad e_3 = \cos((k + 1)x) \qquad e_4 = \sin((k + 1)x).
\end{equation*}
We can write $u = \sum_{i = 1}^4 u_i e_i$, so that
\begin{equation*}
    z_1 = \frac{1}{2} (u_1 - i u_2) \qquad z_2 = \frac{1}{2} (u_3 - i u_4)
\end{equation*}
Let 
\begin{gather*}
    c_1 + d_1 i \coloneqq \frac{b}{2 \beta_k - \beta_{2k}} \\
    c_2 + d_2 i \coloneqq \frac{b}{2 \beta_{k + 1} - \beta_{2k + 2}} \\
    c_3 \coloneqq \frac{b}{\beta_k + \ol \beta_k - \beta_0} \\
    c_4 \coloneqq \frac{b}{\beta_{k + 1} + \ol \beta_{k + 1} - \beta_0} \\
    c_5 + d_5 i \coloneqq \frac{2b}{\ol \beta_k + \beta_{k + 1} - \beta_1}.
\end{gather*}
Note that $c_i$ and $d_i$ depend on $\lambda$ for some fixed $\ell$ and $\sigma$. Substituting into the center manifold function, 
\begin{multline*}
    \Phi(u_1, u_2, u_3, u_4, \lambda) = 2 \re \left[ \frac{1}{4} (c_1 + i d_1)(u_1 - i u_2)^2 (\cos(2 k x) + i \sin(2kx))  \right. \\ 
    + \frac{1}{4} (c_2 + i d_2) ( u_3 - i u_4)^2 (\cos ((2k + 2)x) + i \sin((2k + 2)x)) \\
    + \frac{1}{4} c_3 (u_1^2 + u_2^2) + \frac{1}{4} c_4 (u_3^2 + u_4^2) \\
    \left. + \frac{1}{4}(c_5 + i d_5) (u_1 + i u_2) (u_3 - i u_4)(\cos x + i \sin x) \right]
\end{multline*}
This can be rearranged to yield the following:

\begin{proposition}\label{real_4}
The center manifold function for $\ell \in \mathcal I_4$ is given by 
\begin{multline*}
    \Phi(u, \lambda) = A_1 \cos(2kx) + A_2 \sin(2 k x) + B_1 \cos((2k + 2) x) \\ + B_2 \sin((2k + 2)x) + C + D_1 \cos x + D_2 \sin(x) 
\end{multline*}
where
\begin{gather*}
    A_1 = c_1(u_1^2 - u_2^2) + 2d_1 u_1 u_2 \qquad A_2 = 2 c_1 u_1 u_2 - d_1(u_1^2 - u_2^2) \\
    B_1 = c_2(u_3^2 - u_4^2) + 2d_2 u_3 u_4 \qquad B_2 = 2 c_2 u_3 u_4 - d_2(u_3^2 - u_4^2) \\
    C = \frac{1}{4} c_3 (u_1^2 + u_2^2) + \frac{1}{4} c_4 (u_3^2 + u_4^2) \\
    D_1 = c_5(u_1 u_3 + u_2 u_4) + d_5(u_1 u_4 - u_2 u_3) \\
    D_2 = c_5(u_1 u_4 - u_2 u_3) - d_5 (u_1 u_3 + u_2 u_4)
\end{gather*}
\end{proposition}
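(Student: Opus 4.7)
The plan is to take the complex expression for the center manifold function displayed just before the proposition and perform a direct change of basis from the complex eigenbasis $\{\phi_k, \overline{\phi_k}, \phi_{k+1}, \overline{\phi_{k+1}}\}$ to the real trigonometric basis $\{e_1, e_2, e_3, e_4\}$. Since we already have
\[
z_1 = \tfrac{1}{2}(u_1 - i u_2), \qquad z_2 = \tfrac{1}{2}(u_3 - i u_4),
\]
and $\phi_k = \cos(kx) + i\sin(kx)$, $\phi_{k+1} = \cos((k+1)x) + i\sin((k+1)x)$, every product appearing in \cref{4cross_center_func} can be expanded by Euler's formula and its real part read off explicitly.

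First I would handle the two $z_j^2 \phi_j^2$ terms. Writing $\phi_k^2 = \cos(2kx) + i\sin(2kx)$ and $(u_1 - iu_2)^2 = (u_1^2 - u_2^2) - 2iu_1 u_2$, the product $(c_1 + i d_1)(u_1 - i u_2)^2 \phi_k^2$ expands into four real bilinear combinations of $\{u_1^2 - u_2^2, 2u_1 u_2\}$ and $\{c_1, d_1\}$, which pair with $\cos(2kx)$ and $\sin(2kx)$. Reading off the real part (after the factor of $\tfrac{1}{2}$ from the $2\re$ and the $\tfrac{1}{4}$ from $z_1^2$) gives exactly the stated $A_1$ and $A_2$. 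The $B_1, B_2$ terms arise from the identical calculation applied to $z_2^2\phi_{k+1}^2$, producing coefficients on $\cos((2k+2)x)$ and $\sin((2k+2)x)$.

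Second, the two pure-modulus terms $c_3|z_1|^2$ and $c_4|z_2|^2$ are already real; they contribute only to the constant-in-$x$ piece $C$, yielding $\tfrac14 c_3(u_1^2+u_2^2) + \tfrac14 c_4(u_3^2+u_4^2)$ after multiplying by the overall factor that absorbs the $2\re$ and the $\tfrac{1}{4}$ from $|z_j|^2 = \tfrac14(u_{2j-1}^2 + u_{2j}^2)$. Finally, the cross term $(c_5 + id_5)\overline{z_1} z_2 \phi_1$ is the one that produces the $\cos x$ and $\sin x$ contributions. Expanding $\overline{z_1}z_2 = \tfrac14(u_1+iu_2)(u_3-iu_4) = \tfrac14[(u_1 u_3 + u_2 u_4) + i(u_2 u_3 - u_1 u_4)]$ and multiplying by $(c_5+id_5)(\cos x + i\sin x)$, taking the real part and grouping by $\cos x$ and $\sin x$ yields precisely the stated $D_1$ and $D_2$.

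The main obstacle is purely bookkeeping: keeping straight the four sign conventions that arise from the three independent sources of complex conjugation (the $\overline{z_j}$ in $|z_j|^2$ and the cross term, the $i$'s from the $\phi_j$'s, and the $i$'s from $z_j = \tfrac12(u_{2j-1} - iu_{2j})$) while correctly tracking the prefactor $\tfrac12$ from the ``$c.c.$'' notation. A safe way to avoid sign errors is to do each of the five terms in \cref{4cross_center_func} in isolation, write it as $\re[(\alpha + i\beta)(\gamma + i\delta)(\cos(nx) + i\sin(nx))]$ for appropriate reals, and expand; then sum the contributions coefficient-by-coefficient against $\{1, \cos(nx), \sin(nx)\}$ for each distinct frequency $n \in \{0, 1, 2k, 2k+2\}$. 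Since these frequencies are all distinct for $k \ge 2$, no further collection is required, and the stated formulas follow directly.
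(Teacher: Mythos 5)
Your proposal is correct and is essentially the paper's own derivation: the paper likewise substitutes $z_1 = \tfrac12(u_1 - iu_2)$, $z_2 = \tfrac12(u_3 - iu_4)$ into \cref{4cross_center_func}, expands each of the five terms via Euler's formula, and reads off the real parts frequency by frequency. The only caveat is bookkeeping of the overall prefactors (the $2\re$ against the $\tfrac14$'s), which your term-by-term strategy handles in the same way.
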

Thus \cref{real_2} and \cref{real_2} gives the real manifolds on which the exchange of stability occurs for all $\ell > 2\pi/\sqrt{2}$.

\section*{Acknowledgements}
This research was funded by NSF / DMS grant 1757857 as part of the 2020 Indiana Research Experiences for Undergraduates (REU) Program. The author greatly thanks Shouhong Wang for not only suggesting the problem, but also providing resources and support over the weeks spent on the problem. He would also like to thank Dylan Thurston for running the REU program despite all the challenges presented by Covid-19 outbreak.

\medskip

\bibliographystyle{plain}
\bibliography{ptbib.bib}

\end{document}